\newcommand{\R}{\mathbb{R}}
\DeclareMathOperator*{\dist}{dist}
\def\vint_#1{\mathchoice%
          {\mathop{\kern 0.2em\vrule width 0.6em height 0.69678ex depth -0.58065ex
                  \kern -0.8em \intop}\nolimits_{\kern -0.4em#1}}%
          {\mathop{\kern 0.1em\vrule width 0.5em height 0.69678ex depth -0.60387ex
                  \kern -0.6em \intop}\nolimits_{#1}}%
          {\mathop{\kern 0.1em\vrule width 0.5em height 0.69678ex depth -0.60387ex
                  \kern -0.6em \intop}\nolimits_{#1}}%
          {\mathop{\kern 0.1em\vrule width 0.5em height 0.69678ex depth -0.60387ex
                  \kern -0.6em \intop}\nolimits_{#1}}}
\newcommand{\art}[6]{{\sc #1, \rm #2, \it #3 \bf #4 \rm (#5), \mbox{#6}.}}
\newcommand{\book}[3]{{\sc #1, \it #2, \rm #3.}}
\newcommand{\AND}{{\rm and }}
\newcommand{\p}{{$p\mspace{1mu}$}}
\newcommand{\Om}{\Omega}
\newcommand{\loc}{_{\rm loc}}
\newcommand{\eps}{\varepsilon}
\DeclareMathOperator*{\ext}{ext}
\DeclareMathOperator*{\inte}{int}
\theoremstyle{plain}
\newtheorem{theorem}[equation]{Theorem}
\newtheorem{lemma}[equation]{Lemma}
\newtheorem{proposition}[equation]{Proposition}
\numberwithin{equation}{section}
\theoremstyle{definition}
\theoremstyle{remark}
\newtheorem{remark}[equation]{Remark}
\title[Vanishing properties of sign changing solutions]{Vanishing properties of
    sign changing solutions to \p-Laplace type equations in the plane}
\author{Seppo Granlund} \address[S.G.]{University of Helsinki,
  Department of Mathematics and Statistics, P.O. Box 68, FI-00014
  University of Helsinki, Finland} \email{seppo.granlund@pp.inet.fi}
\author{Niko Marola} \address[N.M.]{University of Helsinki, Department
  of Mathematics and Statistics, P.O. Box 68, FI-00014 University of
  Helsinki, Finland} \email{niko.marola@helsinki.fi}
\date{}
\begin{document}

\keywords{Eigenvalue problem, eigenfunction, eigenvalue, Fu\v cik
  spectrum, Harnack inequality, nodal domain, \p-Laplace, \p-harmonic,
  Rayleigh quotient, sign changing solutions}

\subjclass[2000]{Primary: 35J92, 35P30; Secondary: 35B60, 35J70.}

\begin{abstract}
  We study the nonlinear eigenvalue problem for the \p-Laplacian, and
  more general problem constituting the Fu\v cik spectrum. We are
  interested in some vanishing properties of sign changing solutions
  to these problems.
  Our method is applicable in the plane.
\end{abstract}

\maketitle

\section{Introduction}

We consider the nonlinear eigenvalue problem
\begin{equation} \label{eq:eigpLap} 
-\nabla\cdot(|\nabla
  u|^{p-2}\nabla u)=\lambda|u|^{p-2}u, 
\end{equation}
where $1<p<\infty$, $\lambda\in\R$ is a spectral parameter, and $u=0$
on the boundary of a bounded domain $G\subset\R^2$ with smooth
boundary $\partial G$. A good introduction to the subject is
\cite{LindqvistEig} and the references given there, but see also
\cite{GarciaPeral}. In the present paper, we study some vanishing
properties of the second eigenfunction of the \p-Laplacian, i.e. we
consider a solution to \eqref{eq:eigpLap} corresponding to the second
eigenvalue $\lambda_2$. Our main result is stated in
Theorem~\ref{thm:eiguniquecont}. The method presented in the paper is
based on some topological properties of the nodal domains and the
nodal line of the second eigenfunction; our main tool is to couple the
Harnack inequality and Hopf's lemma with some topological arguments.


Due to lack of the unique continuation property, there is no analogue
of the Courant nodal domain theorem \cite{CouHil} for the nonlinear
eigenvalue problem \eqref{eq:eigpLap}. We refer to
Theorem~\ref{thm:DrabekRobinson} and the discussion in
Section~\ref{sect:Hopf}. However, it is was proved in \cite{Cuesta},
without unique continuation, that the second eigenfunction of the
\p-Laplacian has exactly two nodal domains, $\{x\in G: u(x)> 0\}$ and
$\{x\in G: u(x)<0\}$. It is a profound difficulty that very little
seems to be known about the topology and the geometry of these nodal
domains, even the case of $p=2$ in the plane is not completely known.

For $p=2$ one recovers the linear eigenvalue problem for the
Laplacian, and unique continuation is a well-known feature and the
structure of the spectrum is fully understood. As was mentioned above,
in general the structure of nodal domains is not completely known. A
conjecture of L. Payne~\cite[Conjecture 5]{Payne} states that any
second eigenfunction $u_2$ of the Laplacian on a bounded planar domain
$\Om$ does not have a closed nodal line, i.e.
\[
\overline{\{x\in\Om: u_2(x)=0\}}\cap\partial\Om
\neq\emptyset.
\]
To the best of our knowledge, it is not even known whether the
conjecture is true for bounded simply-connected planar domains. There
are, however, significant contributions. We refer to a discussion
after Proposition~\ref{prop:bN} for references.

We remark that the vanishing properties, the unique continuation
property and the geometry of the nodal line in particular, are still
an open problem for the solutions to non-linear equations, e.g. for
the solutions to the \p-Laplace equation
\begin{equation} \label{eq:pLap}
\nabla\cdot(|\nabla u|^{p-2}\nabla u)=0,
\end{equation}
although there are some results. We refer to \cite{Alessandrini87},
\cite{ALR}, \cite{AlesSiga}, \cite{BoIw}, \cite{Manfredi},
\cite{Martio}.



Our results are applicable also for solutions to a more general
nonlinear eigenvalue problem which constitutes the Fu\v cik
spectrum. Discussions and results on this problem are postponed until
Section~\ref{sect:Fucik}.

Lastly, we mention that the method in this paper is applicable, after
some modifications, to the planar solutions of the Dirichlet problem
for certain quasilinear elliptic equations
\[
\nabla\cdot \mathcal{A}(x,\nabla u)= \mathcal{B}(x,\nabla u)
\]
For these results we refer to \cite{GraMar}.

\subsection*{Notation} Throughout the paper $G$ is a bounded
simply-connected domain, a domain is an open connected set, of $\R^2$,
and in \eqref{eq:eigpLap} we have $1<p<\infty$. We use the notation $B_r =
B_r(x) = B(x,r)$ for concentric open balls of radii $r>0$ centered at
some $x\in G$.
We denote the closure, interior, exterior, and boundary of $E$ by
$\overline{E}$, $\inte(E)$, $\ext(E)$, and $\partial E$, respectively.
%

\subsection*{Acknowledgements} We thank Professor Giovanni
Alessandrini for pointing out a gap in a previous version of this
paper, and also for the example in Remark~\ref{remark}.

\section{Eigenfunctions and nodal domains}
\label{sect:Hopf}

We interpret equation \eqref{eq:eigpLap} in the weak sense. A function
$u\in W_0^{1,p}(G)$, $u$ nontrivial, is an eigenfunction if there
exists $\lambda\in \R$ such that
\begin{equation} \label{eq:weak} \int_G|\nabla u|^{p-2}\nabla u
  \cdot\nabla\eta\, dx = \lambda\int_G |u|^{p-2}u\eta\, dx,
\end{equation}
where $\eta$ is a test-function in $W_0^{1,p}(G)$. The corresponding
real number $\lambda$ is called an eigenvalue. The elliptic regularity
theory implies that $u\in C\loc^{1,\alpha}(G)$ for some $\alpha>0$,
cf. \cite{DiB, Tolksdorf}. We refer to Lindqvist~\cite{LindqvistEig}
and the references therein for an overview on nonlinear eigenfunctions
and their properties.

We mention that by approximation $u$ itself can act as a test-function
in \eqref{eq:weak} and one has that $\lambda>0$.
The least eigenvalue $\lambda_1$, called the first eigenvalue, is
attained as the infimum of the nonlinear Rayleigh quotient. The
corresponding eigenfunction is called the first eigenfunction. Let us
list some well-known features, see e.g. \cite{LindqvistEig}: The
spectrum is a closed set, $\lambda_1$ is simple, i.e., associated
first eigenfunctions are constant multiples of each other, the first
eigenfunctions are the only eigenfunctions not changing signs, we
stress that all higher eigenfunctions necessarily change their sign,
and $\lambda_1$ is isolated.

The structure of the spectrum for the eigenvalue problem for the
Laplacian is fully understood and every eigenvalue has a variational
characterization via the Rayleigh quotient. In our case of the
nonlinear eigenvalue problem there is a second eigenvalue $\lambda_2$,
that is $\lambda_2=\min_{\lambda_1<\lambda}\lambda$, which has a
variational characterization \cite{AnaneTsouli}. 

There are several methods, we refer to \cite{GarciaPeral} for one, to
obtain a sequence of variational eigenvalues,
$\{\lambda_i^\star\}_{i=1}^\infty$, such that
\[
0<\lambda_1^\star<\lambda_2^\star\leq \cdots\leq\lambda_i^\star \to
\infty
\]
as $i\to\infty$, and that $\lambda_1^\star = \lambda_1$ and
$\lambda_2^\star=\lambda_2$. It is not clear whether this sequence
gives the entire spectrum. Indeed, it remains a pertinent question how
one can exhaust the whole spectrum which, in passing, has not been
proved to be discrete.
%

Let us next turn to study nodal domains of an eigenfunction. A maximal
connected component, i.e. one not contained in any other connected
set, of the set $\{x\in G: u(x)\neq 0\}$ is called, in what follows, a
nodal domain. We denote these components by
\[
N_i^+=\{x\in G: u(x)>0\}, \quad \textrm{and} \quad N_j^-=\{x\in G:
u(x)<0\},
\]
where $i,\,j=1,2,\ldots\,$. We remark that the restriction of any
eigenfuntion to a nodal domain is the first eigenfunction with respect
to that nodal domain. It is known, in any dimension $n\geq 2$, that
any eigenfunction has only a finite number of nodal domains. We refer
to Lindqvist~\cite{LindqvistEig} for a proof.




The classical Courant nodal domain theorem, we refer to
\cite[p. 452]{CouHil}, states that any eigenfunction of the Laplacian
corresponding to the $N$-th eigenvalue has at most $N$ nodal
domains. In an interesting paper by Alessandrini~\cite{Aless}, the
validity of the Courant nodal domain theorem for eigenfunctions of
second order self-adjoint elliptic operators with the Lipschitz
continuous coefficients in the principal part was verified. He also
proves that in the plane the Courant nodal domain theorem holds also
when the coefficients are just bounded and measurable; in higher
dimensions, he proves a weakened version of the Courant theorem when
the coefficients in the principal part are H\"older
continuous. Namely, an eigenfunction corresponding to the $N$-th
eigenvalue has at most $2(N-1)$ nodal domains, Theorem 4.5 in
\cite{Aless}.

Fundamentally, proofs of the Courant nodal domain theorem are based on
the following three main tools: 
\begin{enumerate}
\item[(1)] the {\it variational characterization of eigenvalues}, i.e. the
  eigenvalues are characterized as the minimizers of the Rayleigh
  quotient over suitable sets of functions, 
\item[(2)] the {\it maximum principle},

\item[(3)] the {\it unique continuation} property.
\end{enumerate}

When one considers the nonlinear eigenvalue problem and the
corresponding Courant nodal domain theorem, several complications
arise. Apart from the problem of describing higher eigenvalues, the
unique continuation property is still an open question for the
\p-Laplacian.

We want to recall two results related to the nonlinear analogue of the
Courant nodal domain theorem. 

\begin{theorem}[Cuesta et al.~\cite{Cuesta}] \label{thm:2curvenodal}
  Suppose $u$ is an eigenfunction corresponding to the second
  eigenvalue $\lambda_2$. Then $u$ has exactly two nodal domains.
\end{theorem}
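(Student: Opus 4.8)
The plan is to prove the two inequalities ``$\geq 2$'' and ``$\leq 2$'' separately, the first being immediate and the second carrying all the weight. Since $\lambda_2>\lambda_1$, the eigenfunction $u$ is not a first eigenfunction and therefore changes sign, as recalled above; hence both open sets $\{u>0\}$ and $\{u<0\}$ are nonempty and $u$ has at least two nodal domains. For the upper bound I would argue by contradiction and assume that $u$ has nodal domains $\Omega_1,\dots,\Omega_k$ with $k\geq 3$. The starting point is the fact, stated earlier, that $u|_{\Omega_i}$ is up to sign the first eigenfunction of $\Omega_i$, so that $\lambda_1(\Omega_i)=\lambda_2$ for every $i$, where $\lambda_1(\Omega_i)$ denotes the first Dirichlet eigenvalue of the $p$-Laplacian on $\Omega_i$.

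The contradiction is to be extracted from the variational (Lusternik--Schnirelmann/genus) characterization of the second eigenvalue,
\[
\lambda_2=\lambda_2^\star=\inf_{A}\ \sup_{v\in A}\frac{\int_G|\grad v|^p\,dx}{\int_G|v|^p\,dx},
\]
where $A$ ranges over the compact symmetric subsets of $W_0^{1,p}(G)\setminus\{0\}$ of Krasnoselskii genus at least $2$, cf. \cite{AnaneTsouli,GarciaPeral}. The idea is to exhibit one admissible set $A$ of genus $2$ on which the Rayleigh quotient stays \emph{strictly} below $\lambda_2$. This is where the assumption $k\geq 3$ is used: by a short topological argument (the adjacency graph of the nodal domains is connected because $G$ is, and a connected graph on $k\geq3$ vertices has a vertex of degree at least two) one may relabel so that $\Omega_3$ is adjacent to both $\Omega_1$ and $\Omega_2$. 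Pushing $\Omega_1$ and $\Omega_2$ slightly into $\Omega_3$ through two small balls centred at well-separated interior boundary points then produces two \emph{disjoint} subdomains $D_1\supsetneq\Omega_1$ and $D_2\supsetneq\Omega_2$ inside $G$.

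Next I would invoke the strict domain monotonicity of the first eigenvalue: the extension by zero of the first eigenfunction of $\Omega_i$ is an admissible competitor on $D_i$ realising the quotient $\lambda_1(\Omega_i)$, but it vanishes on the nonempty open set $D_i\setminus\overline{\Omega_i}$ and so cannot equal the first eigenfunction of $D_i$, which is strictly positive by the strong maximum principle (equivalently, Harnack). Hence $\lambda_1(D_i)<\lambda_1(\Omega_i)=\lambda_2$ for $i=1,2$. Taking the normalized positive first eigenfunctions $\psi_1,\psi_2$ of $D_1,D_2$, which have disjoint supports, the set $A=\{a\psi_1+b\psi_2:\|a\psi_1+b\psi_2\|_{L^p(G)}=1\}$ is compact, symmetric and homeomorphic to a circle, hence of genus $2$; and since the supports are disjoint, every $v=a\psi_1+b\psi_2\in A$ (so $|a|^p+|b|^p=1$) satisfies
\[
\int_G|\grad v|^p\,dx=|a|^p\,\lambda_1(D_1)+|b|^p\,\lambda_1(D_2)\leq\max\{\lambda_1(D_1),\lambda_1(D_2)\}<\lambda_2 .
\]
Thus the supremum of the Rayleigh quotient over $A$ is strictly below $\lambda_2$, contradicting the identity $\lambda_2=\lambda_2^\star$ above. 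Therefore $k\leq 2$, which together with the lower bound gives exactly two nodal domains.

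The hard part will be the geometric step that manufactures the two disjoint enlarged subdomains $D_1,D_2$, and making it robust against the behaviour of the nodal set $\{u=0\}$, for which unique continuation is unavailable. Concretely one must handle separately the case in which the nodal set has empty interior (where the common-neighbour/degree-two argument and the pushing-into-$\Omega_3$ construction apply) and the case in which it has nonempty interior (where there is ample room to enlarge two domains into the nodal set itself); in both cases the enlargements can be taken disjoint and strict, but verifying this cleanly in every configuration is the delicate point. The analytic ingredient, strict domain monotonicity of $\lambda_1$, is standard and rests on the strong maximum principle and the Harnack inequality, tools that are in any event central to the remainder of the paper.
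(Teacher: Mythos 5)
First, a point of orientation: the paper does not prove this statement at all --- Theorem~\ref{thm:2curvenodal} is imported verbatim from Cuesta--De~Figueiredo--Gossez \cite{Cuesta}, so there is no in-paper proof to match. Their actual argument is purely variational: it runs through the mountain-pass characterization of $\lambda_2$ developed in \cite{CuestaJDiffEq} (an infimum over paths on the $L^p$-sphere joining $\pm u_1$) and a deformation argument, and it deliberately avoids any geometric analysis of the nodal set. Your outline is instead the classical Courant-type route: sign change gives ``at least two''; $\lambda_1(\Omega_i)=\lambda_2$ on each nodal domain; strict domain monotonicity of $\lambda_1$ (correctly justified via Harnack and positivity of the first eigenfunction on a \emph{connected} enlargement); and a genus-$2$ test set of two disjointly supported first eigenfunctions beating $\lambda_2^\star=\lambda_2$. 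All of those analytic pieces are sound.

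The genuine gap is exactly where you place it, and it is not a removable technicality in this setting: the construction of two \emph{disjoint} connected open sets $D_1\supsetneq\Omega_1$, $D_2\supsetneq\Omega_2$ with $D_i\setminus\overline{\Omega_i}$ containing a ball. Your justification rests on an ``adjacency graph'' of nodal domains being connected with a common neighbour $\Omega_3$; neither claim is established, and both are exactly what fails to be controllable without unique continuation. If the nodal set $\{u=0\}$ has nonempty interior, the graph whose edges are shared interior boundaries need not be connected at all; and even when the nodal set has empty interior, what you actually need is a point $x_1\in\partial\Omega_1\cap G$ with $x_1\notin\overline{\Omega_2}$ (and symmetrically for $\Omega_2$), so that a small ball about $x_1$ avoids $\Omega_2$ entirely. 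Nothing rules out a configuration in which $\partial\Omega_1\cap G\subset\partial\Omega_2\cap\partial\Omega_3$, i.e.\ every interior boundary point of $\Omega_1$ is a triple point; then every admissible ball about every candidate $x_1$ meets $\Omega_2$ and the enlargements cannot be made disjoint. Spiral-type accumulation of nodal components, as in the example of Remark~\ref{remark}, is precisely the kind of behaviour that produces such pathologies and is the reason this paper has to impose an extra local connectedness hypothesis in its own main theorem. So your argument, as written, proves the theorem only under additional regularity of the nodal set; to get the unconditional statement one has to switch to the variational/deformation argument of \cite{Cuesta} (or invoke Theorem~\ref{thm:DrabekRobinson}(2) with $N=2$), which never touches the geometry of $\{u=0\}$.
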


\begin{theorem}[Dr\'abek--Robinson~\cite{DrabekRobinson}] \label{thm:DrabekRobinson}
\begin{itemize}
\item[]
\item[(1)] Suppose solutions to \eqref{eq:eigpLap} satify the unique
  continuation property.  If $u$ is an eigenfunction corresponding to
  the $N$-th eigenvalue, then $u$ has at most $N$ nodal domains.
\smallskip
\item[(2)] Suppose $u$ is an eigenfunction corresponding to the $N$-th
  eigenvalue, then $u$ has at most $2(N-1)$ nodal domains.
\end{itemize}
\end{theorem}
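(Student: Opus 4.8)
The plan is to derive both bounds from the Lusternik--Schnirelmann (genus) characterization of the variational eigenvalues $\lambda_i^\star$ together with the elementary observation, already recorded above, that the restriction of $u$ to each of its nodal domains is a first eigenfunction on that domain. Recall that
\[
\lambda_m^\star=\inf_{A\in\Sigma_m}\ \sup_{v\in A}\int_G|\nabla v|^p\,dx,
\]
where $\Sigma_m$ is the class of compact symmetric subsets $A$ of the unit sphere $\{v\in W_0^{1,p}(G):\int_G|v|^p\,dx=1\}$ whose Krasnoselskii genus $\gamma(A)$ is at least $m$, and that $\lambda_m^\star$ agrees with the $m$-th eigenvalue for $m=1,2$ (cf.\ \cite{GarciaPeral, AnaneTsouli}). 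Throughout I identify the $N$-th eigenvalue with $\lambda=\lambda_N^\star$.

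First I would fix the eigenfunction $u$ at the eigenvalue $\lambda$, list its nodal domains $D_1,\dots,D_k$, and set $u_i=u\chi_{D_i}$, extended by zero; each $u_i$ lies in $W_0^{1,p}(G)$. Because the supports are pairwise disjoint and $u_i$ is, up to sign, the first eigenfunction on $D_i$ with the \emph{same} eigenvalue $\lambda$, for every $v=\sum_i t_iu_i$ one computes, separately on each $D_i$,
\[
\int_G|\nabla v|^p\,dx=\sum_i|t_i|^p\int_{D_i}|\nabla u_i|^p\,dx=\lambda\sum_i|t_i|^p\int_{D_i}|u_i|^p\,dx=\lambda\int_G|v|^p\,dx .
\]
Thus the Rayleigh quotient is identically $\lambda$ on the $k$-dimensional space spanned by the $u_i$. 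The map $t\mapsto\sum_i t_iu_i/\|\sum_i t_iu_i\|_{L^p}$ is odd and, by disjointness of supports, well defined and injective on the sphere $S^{k-1}$, so its image $A$ is a compact symmetric set with $\gamma(A)=k$ and $\sup_{v\in A}\int_G|\nabla v|^p\,dx=\lambda$. The definition of the variational eigenvalues then yields $\lambda_k^\star\le\lambda=\lambda_N^\star$.

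To finish part~(2) I would combine $\lambda_k^\star\le\lambda_N^\star$ with the monotonicity of $m\mapsto\lambda_m^\star$ and the standard subadditivity and invariance properties of the genus; the content is to account for the possible coincidence of consecutive variational eigenvalues at the level $\lambda$ and to show that the index $N$ at which $\lambda$ appears forces $k\le 2(N-1)$. For part~(1) I would, in addition, invoke the unique continuation property: any $v=\sum_i t_iu_i$ that is a genuine weak eigenfunction but has some $t_i=0$ vanishes on the open set $D_i$ and is therefore identically zero, which is impossible on the unit sphere. Feeding this non-degeneracy into the min--max scheme removes the slack inherent in part~(2) and improves the bound to the sharp $k\le N$.

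I expect the genuine obstacle to be the sharp genus bookkeeping in part~(2): unlike the linear theory, the family $A$ consists overwhelmingly of functions that realize the value $\lambda$ without solving the Euler--Lagrange equation, since the normal fluxes across the nodal line do not cancel for generic coefficients. One therefore cannot simply produce an eigenfunction vanishing on an open set and invoke unique continuation. Quantifying exactly how much the genus of the admissible nodal family can exceed the index $N$, and thereby pinning down the factor $2$, is the delicate point, and it is precisely this gap that the unique continuation hypothesis closes in part~(1).
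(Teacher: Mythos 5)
The paper does not actually prove Theorem~\ref{thm:DrabekRobinson}: it is quoted verbatim from \cite{DrabekRobinson}, so your proposal must be measured against the argument in that reference rather than against anything in this text. Your opening move is correct and is indeed the standard one: testing \eqref{eq:weak} with the nodal restriction $u_i$ gives $\int_{D_i}|\nabla u|^p\,dx=\lambda\int_{D_i}|u|^p\,dx$, hence the Rayleigh quotient is identically $\lambda$ on $\mathrm{span}\{u_1,\dots,u_k\}$, the corresponding unit $L^p$-sphere is a compact symmetric set of genus $k$, and $\lambda_k^\star\le\lambda_N^\star$ follows. From that point on, however, both parts have genuine gaps.

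For part (1) your use of unique continuation is aimed at the wrong object: you look for eigenfunctions $\sum_i t_iu_i$ in the \emph{full} span ``with some $t_i=0$'', but a critical point in the full span may have all $t_i\neq 0$ ($u$ itself is such a point) and then vanishes on no open set. The argument that works assumes $k\ge N+1$ and passes to the sub-sphere $A_N$ spanned by only $u_1,\dots,u_N$; since $\gamma(A_N)=N$ and $\sup_{A_N}\int_G|\nabla v|^p\,dx=\lambda_N^\star$, the set $A_N$ is optimal for the $N$-th min--max level, and the deformation lemma (via the Palais--Smale condition) forces $A_N$ to contain a genuine eigenfunction $w$. Every element of $A_N$ vanishes on the omitted open nodal domain $D_{N+1}$, so unique continuation kills $w$, a contradiction. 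The step you never supply --- and which you yourself flag when you note that generic points of $A$ do not solve the Euler--Lagrange equation --- is precisely that an optimal set must meet the critical set; without it, unique continuation has nothing to act on. For part (2) the situation is worse: $\lambda_k^\star\le\lambda_N^\star$ plus monotonicity only yields $\lambda_N^\star=\cdots=\lambda_k^\star$, which bounds nothing since the variational sequence may repeat, and ``subadditivity and invariance of the genus'' do not by themselves produce the factor $2$. The actual mechanism in \cite{DrabekRobinson} is a pigeonhole on signs: if $k\ge 2N-1$, at least $N$ nodal domains carry the same sign of $u$, and running the deformation argument on the sub-sphere spanned by those $N$ restrictions produces an eigenfunction built entirely from one-signed pieces, whose sign structure is then shown to be incompatible with $\lambda_N>\lambda_1$. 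This is the entire content of the $2(N-1)$ bound, not bookkeeping, and your last paragraph concedes it is missing.
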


We close this section by recalling the Harnack inequality and the
following version of Hopf's lemma. For the proof we refer to, e.g.,
\cite[Lemma A.3]{Saka} and \cite[Proposition 3.2.1]{Tolks}.

\begin{lemma}[Hopf's lemma] \label{lemma:Hopf} Let
  $\Omega\subset\R^n$, $n\geq 2$, be a bounded domain with smooth
  boundary $\partial\Om$. Let $u\in C^1(\overline{\Om})$ satisfy
  \[
  -\nabla\cdot(|\nabla u|^{p-2}\nabla u)\geq 0
  \]
  interpreted in the weak sense. Assume further that $u>0$ in $\Om$
  and
\[
u(x_0)=0 \quad \textrm{at }\ x_0\in\partial\Om.
\]
Then $\nabla u(x_0)\neq 0$.
\end{lemma}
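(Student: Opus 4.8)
The plan is to prove the lemma by a barrier argument based on the weak comparison principle for the $p$-Laplacian. Since $\partial\Omega$ is smooth, it satisfies an interior ball condition at $x_0$: there is a ball $B=B(y,R)\subset\Omega$ with $x_0\in\partial B$ and $|x_0-y|=R$. It then suffices to work in the annulus $A=B(y,R)\setminus\overline{B(y,\rho)}$ for a fixed $\rho\in(0,R)$ and to compare $u$ with an explicit radial subsolution that vanishes on the outer sphere.

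First I would construct the barrier. Set $w(x)=\phi(r)$ with $r=|x-y|$ and $\phi(r)=c\,(e^{-\alpha r}-e^{-\alpha R})$, where $c,\alpha>0$ are to be chosen. For radial functions the operator reads
\[
-\nabla\cdot(|\nabla w|^{p-2}\nabla w)=-|\phi'(r)|^{p-2}\Big((p-1)\phi''(r)+\frac{n-1}{r}\phi'(r)\Big).
\]
Since $\phi'(r)=-c\alpha e^{-\alpha r}<0$ and $\phi''(r)=c\alpha^2 e^{-\alpha r}>0$, the bracket equals $c\alpha e^{-\alpha r}\big((p-1)\alpha-(n-1)/r\big)$, which is strictly positive on $[\rho,R]$ once $\alpha$ is chosen so large that $(p-1)\alpha>(n-1)/\rho$. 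Hence $w$ is a strict subsolution in $A$, i.e. $-\nabla\cdot(|\nabla w|^{p-2}\nabla w)\le 0$ there, and crucially $\nabla w\neq 0$ throughout $\overline A$, so the barrier is nondegenerate. Moreover $w=0$ on $\{r=R\}$, and choosing $c$ so small that $w\le m:=\min_{\{r=\rho\}}u$ on the inner sphere (note $m>0$ because $u>0$ on the compact set $\{r=\rho\}\subset\Omega$) arranges $w\le u$ on all of $\partial A$.

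Next I would invoke the weak comparison principle. On $A$ we have $-\nabla\cdot(|\nabla u|^{p-2}\nabla u)\ge 0\ge -\nabla\cdot(|\nabla w|^{p-2}\nabla w)$ together with $u\ge w$ on $\partial A$; testing the two inequalities against $(w-u)_+\in W_0^{1,p}(A)$ and using the strict monotonicity of the map $\xi\mapsto|\xi|^{p-2}\xi$ yields $u\ge w$ in $A$. Since $u(x_0)=w(x_0)=0$ and $u\ge w$ in $A$, the difference $u-w$ attains its minimum value $0$ at the boundary point $x_0$, so its inward radial derivative there is nonnegative. Because $u,w\in C^1$ near $x_0$, this gives, for the outward unit normal $\nu=(x_0-y)/R$,
\[
\frac{\partial u}{\partial\nu}(x_0)\le\frac{\partial w}{\partial\nu}(x_0)=\phi'(R)=-c\alpha e^{-\alpha R}<0,
\]
and therefore $\nabla u(x_0)\neq 0$.

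The main obstacle is the comparison step: the $p$-Laplacian is degenerate, so one cannot appeal to a classical maximum principle and must argue through the weak formulation. The two features that make this go through are that the barrier was deliberately built to be nondegenerate ($\nabla w\neq 0$ on $\overline A$) and that $u\in C^1(\overline\Omega)$; together they justify passing from the ordering $u\ge w$ to the pointwise normal-derivative inequality at $x_0$. Verifying the admissibility of $(w-u)_+$ as a test function and extracting the ordering from the monotonicity inequality is the one genuinely technical point, while the choice of $\alpha$ and $c$ and the computation of $\phi'(R)$ are routine.
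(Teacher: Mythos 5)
Your barrier argument is correct: the radial computation, the choice of $\alpha$ with $(p-1)\alpha>(n-1)/\rho$ and of $c$ small, the weak comparison via the test function $(w-u)_+$ and monotonicity of $\xi\mapsto|\xi|^{p-2}\xi$, and the final normal-derivative comparison at $x_0$ all go through. The paper itself gives no proof of this lemma, referring instead to Sakaguchi and Tolksdorf, and your argument is essentially the standard exponential-barrier proof found in those references, so there is nothing to flag.
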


The following theorem can be found in Trudinger~\cite{Trudinger}. The
proof is based on the Moser iteration method.

\begin{theorem}[Harnack's inequality] \label{thm:Harnack} Suppose
  $u\geq 0$ is an eigenfunction. Then the following inequality is
  valid
\[
\sup_{B_r} u\leq C\inf_{B_r} u,
\]
where $\overline{B}_{2r}\subset G$ and the constant $C$ depends on $n$
and $p$.
\end{theorem}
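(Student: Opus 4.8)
The plan is to follow the Moser iteration scheme, viewing the eigenvalue equation as a quasilinear $p$-Laplace type equation with a lower-order term. Since $u\geq 0$ we have $|u|^{p-2}u=u^{p-1}$, so \eqref{eq:eigpLap} reads weakly as $-\nabla\cdot(|\nabla u|^{p-2}\nabla u)=\lambda u^{p-1}$ in $G$. This falls within the classical Serrin--Trudinger structure $\nabla\cdot A(x,\nabla u)=B(x,u)$ with $A(x,\xi)\cdot\xi\geq|\xi|^p$, $|A(x,\xi)|\leq|\xi|^{p-1}$, and the subcritical bound $|B(x,u)|\leq\lambda u^{p-1}$. The decisive structural feature is that the right-hand side carries exactly the natural $p$-homogeneity, so that it can be absorbed as a lower-order contribution at every step of the iteration, and the dependence on $\lambda$ and the scale $r$ enters only through the controlled quantity $\lambda r^{p}$.

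First I would establish the local supremum bound. Testing \eqref{eq:weak} with $\eta^p u^{\beta}$ for $\beta\geq 1$, where $\eta$ is a cutoff between concentric balls inside $B_{2r}$, yields a Caccioppoli inequality for the power $w=u^{(\beta+p-1)/p}$. Applying the Sobolev inequality on each ball converts this into a reverse H\"older inequality with a fixed gain $\kappa=n/(n-p)>1$ in integrability (any $\kappa>1$ when $p\geq n$). Iterating over exponents $\beta_k$ with $\kappa^k\to\infty$ along a sequence of shrinking radii produces, for a fixed small $s_0>0$,
\[
\sup_{B_r}u\leq C\Big(\tfrac{1}{|B_{3r/2}|}\int_{B_{3r/2}}u^{s_0}\,dx\Big)^{1/s_0}.
\]

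Second, I would prove the complementary lower bound for the infimum by the same device applied to negative exponents $\beta<0$, which gives a reverse H\"older chain for negative powers of $u$ and hence
\[
\Big(\tfrac{1}{|B_{3r/2}|}\int_{B_{3r/2}}u^{-s_0}\,dx\Big)^{-1/s_0}\leq C\inf_{B_r}u.
\]
The two chains are joined by the logarithmic estimate: testing with the critical choice $\eta^p u^{1-p}$ gives
\[
\int_{B_r}|\nabla\log u|^p\,dx\leq Cr^{n-p},
\]
so by the Poincar\'e inequality $\log u\in\mathrm{BMO}(B_{3r/2})$. The John--Nirenberg inequality then furnishes, after possibly shrinking $s_0$, a constant with
\[
\Big(\tfrac{1}{|B|}\int_{B}u^{s_0}\,dx\Big)\Big(\tfrac{1}{|B|}\int_{B}u^{-s_0}\,dx\Big)\leq C.
\]
Chaining the three displays yields $\sup_{B_r}u\leq C\inf_{B_r}u$.

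The main obstacle is the logarithmic/John--Nirenberg bridge: verifying that $\log u$ has bounded mean oscillation and extracting the two-sided exponential integrability is the delicate point, since the nonlinear exponent $p\neq 2$ makes the algebra of the test-function computation more involved, and one must track the lower-order term $\lambda u^{p-1}$ throughout to be sure it does not spoil the uniform constants. Because the statement fixes $\overline{B}_{2r}\subset G$, all estimates are carried out on a single ball; normalizing by the scale $r$ then reduces the dependence of $C$ to $n$ and $p$.
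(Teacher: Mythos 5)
Your outline is the standard Serrin--Trudinger Moser iteration (sup bound from positive powers, inf bound from negative powers, joined by the $\log u\in\mathrm{BMO}$ and John--Nirenberg step), which is exactly the proof the paper invokes by citing Trudinger. The only caveat is that with the zeroth-order term the Harnack constant genuinely picks up a dependence on $\lambda r^p$, which is harmless here since $\overline{B}_{2r}\subset G$ and $G$ is bounded, but worth stating explicitly.
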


\section{A few facts about the plane topology}
\label{sect:topology}

We recall a few facts about the topology of planar sets; a good
reference is \cite{Newman}.

We first recall some basic concepts. Let $\Omega$ be any domain in
$\R^2$. A Jordan arc is a point set which is homeomorphic with
$[0,1]$, wheras a Jordan curve is a point set which is homeomorphic
with a circle. By Jordan's curve theorem a Jordan curve in $\R^2$ has
two complementary domains, and the curve is the boundary of each
component. One of these two domains is bounded and this domain is
called the interior of the Jordan curve. A domain whose boundary is a
Jordan curve is called a Jordan domain. 

As a related note, it is well known that the boundary of a bounded
simply-connected domain in the plane is connected. In the plane a
simply-connected domain $\Om$ can be defined by the property that all
points in the interior of any Jordan curve, which consists of points
of $\Om$, are also points of $\Om$ \cite{Nehari}.




A Jordan arc with one end-point on $\partial\Omega$ and all its other
points in $\Omega$, is called an end-cut. If both end-points are in
$\partial \Omega$, and the rest in $\Omega$, a Jordan arc is said to
be a cross-cut in $\Omega$. A point $x\in\partial\Omega$ is said to be
accessible from $\Omega$ if it is an end-point of an end-cut in
$\Omega$. Accessible boundary points of a planar domain are aplenty as
the following lemma states.

\begin{lemma}[p. 162, \cite{Newman}] \label{lemma:Newman2} Let
  $\Omega$ be any domain in $\R^2$. The accessible points of $\partial
  \Omega$ are dense in $\partial\Omega$.
\end{lemma}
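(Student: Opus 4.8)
The plan is to prove density directly: fix an arbitrary point $x_0\in\partial\Omega$ together with $\varepsilon>0$, and produce an accessible boundary point within distance $\varepsilon$ of $x_0$. The underlying idea is to shoot a straight segment from a nearby interior point toward $x_0$ and stop it at the instant it first meets the boundary; that first-exit point will be the accessible point we seek. (Throughout we may assume $\partial\Omega\neq\emptyset$, since for $\Omega=\R^2$ the boundary is empty and the statement is vacuous.)

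First I would pick $y\in\Omega$ with $|y-x_0|<\varepsilon$; such a point exists precisely because $x_0\in\partial\Omega$. Parametrize the segment by $\gamma(t)=(1-t)y+t\,x_0$ for $t\in[0,1]$, so that $\gamma(0)=y\in\Omega$ while $\gamma(1)=x_0\notin\Omega$, the boundary being disjoint from the open set $\Omega$. Set $t^\ast=\inf\{t\in[0,1]:\gamma(t)\notin\Omega\}$. Since $\Omega$ is open, $\gamma(t)\in\Omega$ for all small $t$, so $t^\ast>0$; since $\Omega^c$ is closed and $\gamma$ is continuous, the set over which the infimum is taken is closed, so the infimum is attained and $z:=\gamma(t^\ast)\notin\Omega$. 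Because $\gamma(t)\in\Omega$ for every $t<t^\ast$, the point $z$ is a limit of points of $\Omega$, hence $z\in\overline{\Omega}$; being also outside $\Omega$, it lies on $\partial\Omega$.

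Next I would check that the truncated segment $\gamma|_{[0,t^\ast]}$ is exactly an end-cut in the sense recalled before the lemma. It is a Jordan arc, being an injective straight segment with $y\neq z$ (injectivity follows from $y\neq x_0$, which holds as $y\in\Omega$ and $x_0\in\partial\Omega$ are distinct); its endpoint $z$ lies on $\partial\Omega$; and all its remaining points $\gamma(t)$ for $t\in[0,t^\ast)$, together with the other endpoint $y$, lie in $\Omega$. Hence $z$ is accessible from $\Omega$. Since $z$ sits on the segment between $y$ and $x_0$, we obtain $|z-x_0|=(1-t^\ast)\,|y-x_0|\le|y-x_0|<\varepsilon$, exhibiting an accessible boundary point arbitrarily close to $x_0$ and proving the claim.

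The argument is elementary, so the one place deserving care is the verification that the first-exit point $z$ genuinely lands on $\partial\Omega$ rather than merely outside $\Omega$. This is the crux, and it rests on playing the openness of $\Omega$ against the closedness of its complement: openness forces $\gamma(t)\in\Omega$ for all $t<t^\ast$ (so $z$ is approximated from within $\Omega$), while closedness of $\Omega^c$ forces $z\notin\Omega$, and the two together place $z$ in $\overline{\Omega}\setminus\Omega=\partial\Omega$. The remaining bookkeeping, namely that the truncated segment meets the stated definition of an end-cut and that the distance estimate holds, is then immediate.
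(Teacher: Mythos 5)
Your argument is correct and complete: the first-exit point of the segment from a nearby interior point toward $x_0$ lands on $\partial\Omega$ by the openness/closedness interplay you describe, the truncated segment is precisely an end-cut in the sense defined before the lemma, and the distance estimate finishes the density claim. The paper itself gives no proof of this lemma, citing only Newman's book, so your elementary segment construction (which is the standard one) fills in what the paper leaves to the reference.
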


\begin{lemma}[p. 118, \cite{Newman}] \label{lemma:cross-cut} If both
  end-points of a cross-cut $\gamma$ in a domain $\Om$ are on the same
  component of $\partial\Om$, then $\Om\setminus\gamma$ has two
  components, and $\gamma$ is contained in the boundaries of both.
\end{lemma}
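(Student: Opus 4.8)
The plan is to reduce the statement to the Jordan curve theorem by completing the cross-cut $\gamma$ to a genuine Jordan curve and then reading off the three assertions from the resulting decomposition of the plane. Write $a$ and $b$ for the two end-points of $\gamma$, so that the relative interior $\gamma^\circ=\gamma\setminus\{a,b\}$ lies in $\Om$ while, by hypothesis, $a$ and $b$ lie on one and the same component $C$ of $\partial\Om$.

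First I would establish the purely local fact that a cross-cut has exactly two sides and that $\Om\setminus\gamma$ has \emph{at most} two components; this part uses neither the hypothesis on $C$ nor the connectivity of $\partial\Om$. Fix an interior point $x_0\in\gamma^\circ$. Because in the plane every Jordan arc is tame (a consequence of the Schoenflies theorem), there is a Jordan domain $V\subset\Om$ with $x_0\in V$ such that $\gamma\cap\overline{V}$ is a cross-cut of $V$; completing this cross-cut by a sub-arc of the Jordan curve $\partial V$ and applying the Jordan curve theorem splits $V$ into exactly two pieces $V^+$ and $V^-$. Since $\Om$ is open and connected it is arcwise connected, so any point of $\Om\setminus\gamma$ can be joined inside $\Om$ to a point of $\gamma^\circ$; following such an arc until it first meets $\gamma$, and using that the local two-sided picture is consistent along the connected set $\gamma^\circ$ (which is covered by Jordan domains of the above type), one sees that every point of $\Om\setminus\gamma$ can be joined within $\Om\setminus\gamma$ to a point of $V^+$ or of $V^-$. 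Hence $\Om\setminus\gamma$ has at most two components.

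Next I would use the hypothesis that $a$ and $b$ lie on the same boundary component to show there are \emph{at least} two components. The idea is to join $a$ to $b$ by a Jordan arc $\tau$ whose interior is disjoint from $\overline{\Om}$ and which meets $\gamma$ only at $a,b$, so that $J=\gamma\cup\tau$ is a Jordan curve. By the Jordan curve theorem $J$ has an interior domain $\inte(J)$ and an exterior $\ext(J)$. Near $x_0$ the curve $J$ coincides with $\gamma$, and each of the connected sets $V^+,V^-$ is disjoint from $J$, hence contained in one of the two domains $\inte(J),\ext(J)$; since $x_0\in J$ is a boundary point of both domains, $V^+$ and $V^-$ cannot lie in the same one. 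Any arc joining $V^+$ to $V^-$ inside $\Om\setminus\gamma$ would avoid $\gamma$ and also $\tau$ (its interior lies outside $\overline{\Om}$), hence would avoid $J$ while passing from $\inte(J)$ to $\ext(J)$, which is impossible. Thus $V^+$ and $V^-$ lie in distinct components, and combined with the previous paragraph this shows $\Om\setminus\gamma$ has exactly two components.

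The construction of $\tau$ is the main obstacle: the component $C$ need not be a Jordan curve, nor even locally connected or arcwise connected, so one cannot simply take an arc lying inside $C$. The clean way around this is to pass to the one-point compactification $S^2=\R^2\cup\{\infty\}$, to replace $C$ by the whole complementary continuum $K\subset S^2\setminus\Om$ containing it, and to invoke the separation theory of the plane---the unicoherence of $S^2$ together with Janiszewski's theorem---to conclude directly that $\gamma\cup K$ separates $V^+$ from $V^-$; since $(\gamma\cup K)\cap\Om=\gamma^\circ$, this is precisely the separation of $V^+$ and $V^-$ inside $\Om\setminus\gamma$. This is the plane-topology machinery developed in \cite{Newman}, which is why it is legitimate to quote the result rather than reprove it here. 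Finally, the claim that $\gamma$ lies on the boundary of both components is immediate from the local picture: every point of $\gamma^\circ$ is a limit of points of $V^+$ and of $V^-$, hence of points of each of the two components, while the end-points $a,b$ are limits of $\gamma^\circ$; thus every point of $\gamma$ belongs to the closure of each component.
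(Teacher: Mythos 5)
The paper does not prove this lemma at all --- it is quoted from Newman \cite{Newman} --- so the only comparison available is with the argument in that reference, and your proposal is in substance a reconstruction of it: tameness of the arc gives a consistent two-sided local picture along $\gamma^\circ=\gamma\setminus\{a,b\}$ and hence at most two components; a separation theorem applied to $\gamma$ together with the complementary continuum through its endpoints gives at least two; and the boundary statement falls out of the local picture. You also correctly isolate the real difficulty, namely that the boundary component $C$ need not be arcwise (or even locally) connected, so the naive completion of $\gamma$ to a Jordan curve by an exterior arc $\tau$ is unavailable, and the right substitute is the component $K$ of $S^2\setminus\Om$ containing $C$ (legitimate, since $C\subset\partial\Om\subset S^2\setminus\Om$ forces $a,b\in K$, and $(\gamma\cup K)\cap\Om=\gamma^\circ$).

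The one step that is thinner than you advertise is the claim that unicoherence plus Janiszewski ``directly'' yield that $\gamma\cup K$ separates $V^+$ from $V^-$. The second Janiszewski theorem (two continua with disconnected intersection have separating union, here $\gamma\cap K=\{a,b\}$) only says that $S^2\setminus(\gamma\cup K)$ is disconnected; it does not locate $V^+$ and $V^-$ in different components, and the first Janiszewski theorem runs in the wrong direction precisely because $\gamma\cap K$ is disconnected. Two standard ways to close this: (i) let $\Om_1$ be the component of $S^2\setminus K$ containing $\Om$; unicoherence makes $\Om_1$ simply connected, $\gamma$ is a cross-cut of $\Om_1$ with $V^\pm$ on its two sides, and one quotes the cross-cut theorem for simply connected domains applied to $\Om_1\setminus\gamma\supset\Om\setminus\gamma$; or (ii) argue directly: if an arc $\sigma$ joined $p\in V^+$ to $q\in V^-$ in $S^2\setminus(\gamma\cup K)$, close it up through $x_0$ by a short arc crossing $\gamma$ exactly once to obtain a Jordan curve $\Sigma$ with $\Sigma\cap(\gamma\cup K)=\{x_0\}$; then $(\gamma\cup K)\setminus\{x_0\}$ is connected (each half of $\gamma$ still touches $K$ at $a$ or $b$ --- this is exactly where the hypothesis enters), is disjoint from $\Sigma$, yet meets both complementary domains of $\Sigma$, contradicting Lemma~\ref{lemma:Newman3}. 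With either completion your argument is sound; the remaining points (consistency of the side-labelling along the connected set $\gamma^\circ$, and $a,b\in\overline{\gamma^\circ}$ giving $\gamma\subset\partial W$ for each component $W$) are fine as stated.
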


We shall make use of the preceding lemmas, as well as lemmas below, in
the proof of our main theorem, Theorem~\ref{thm:eiguniquecont}.

In what follows, $u$ is the second eigenfunction of the
\p-Laplacian. By Theorem~\ref{thm:2curvenodal} $u$ has exactly two
nodal domains, which we denote by $N^+$ and $N^-$.

\begin{remark} \label{rmk:accessible} Let us define the set
\[
\partial N_A^+ = \{x\in\partial N^+\cap G: x \textrm{ is accessible from }
N^+\},
\]
and correspondingly $\partial N_A^-$. Let $x\in\partial N^+\cap G$ and
consider a spherical neighborhood $\overline{B}_\delta(x)\subset
G$. It is possible to select points $x_0\in B_{\delta/2}(x)\cap N^+$
and $x_\delta\in B_{\delta/2}(x)\cap\partial N^+$ such that $x_\delta$
is the closest point to $x_0$. In fact, the line segment $[x_0,\,
x_\delta]$ is contained in $\overline{N^+}$ and so $x_\delta$ is
accessible. In addition, $x_\delta\in\partial B_\rho(x_0)$, where
$\rho=|x_0-x_\delta|$. Then by Hopf's lemma, Lemma~\ref{lemma:Hopf},
$\nabla u(x_0)\neq 0$.

Since the preceding procedure can be carried out at any arbitrary
small scale $\delta>0$, we obtain that the set $\{x\in\partial N^+:
x\in\partial N_A^+,\, u(x)=0,\, \nabla u(x)\neq 0\}$ is dense in the
relative topology. The case of $\partial N^-$ is treated similarly.

We also remark that neither $N^+$ nor $N^-$ cannot have isolated
boundary points, this can be seen by applying Harnack's inequality.
\end{remark}

We then recall a few facts about connected sets and
$\eps$-chains. If $x$ and $y$ are distinct points, then an
$\eps$-chain of points joining $x$ and $y$ is a finite sequence of
points
\[
x=a_1,\, a_2,\,\ldots,\,a_k=y
\]
 such that $|a_i-a_{i+1}|\leq\eps$, for $i=1,\,\ldots,\,k-1$. A set of points is
$\eps$-connected if every pair of points in it can be joined by an
$\eps$-chain of points in the set.

\begin{lemma}[Theorem~5.1, p. 81, \cite{Newman}] \label{lemma:chain} A
  compact set $F$ in $\R^2$ is connected if and only if it is
  $\eps$-connected for every $\eps>0$.
\end{lemma}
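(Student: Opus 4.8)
The plan is to prove the two implications separately, with compactness entering only in the second one.

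First I would show that connectedness implies $\eps$-connectedness for every $\eps>0$; this direction needs no compactness. Fix $\eps>0$ and declare two points of $F$ equivalent when they are joined by an $\eps$-chain inside $F$. This is plainly an equivalence relation (chains concatenate and reverse), so it partitions $F$ into classes. Each class is open in $F$: if $a$ lies in a class, then every $b\in F$ with $|a-b|<\eps$ extends any chain ending at $a$ and hence lies in the same class, so the class contains the relative neighborhood $B_\eps(a)\cap F$. Since distinct classes are disjoint, each class is also closed in $F$, its complement being a union of open classes. As $F$ is connected there can be only one class, i.e. $F$ is $\eps$-connected.

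For the converse I would argue by contradiction, and here compactness is essential. Suppose $F$ is $\eps$-connected for every $\eps>0$ but is disconnected, so $F=A\cup B$ with $A,B$ nonempty, disjoint, and relatively closed in $F$. Because $F$ is compact, $A$ and $B$ are themselves compact, and two disjoint nonempty compact sets are separated by a strictly positive distance $\delta:=\dist(A,B)>0$. Choosing $\eps<\delta$ and any $x\in A$, $y\in B$, the hypothesis furnishes an $\eps$-chain $x=a_1,\ldots,a_k=y$ in $F$. Each $a_i$ lies in $A$ or in $B$; since $a_1\in A$ and $a_k\in B$, there is a first index $i$ at which the chain crosses over, so that $a_i\in A$ and $a_{i+1}\in B$. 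Then $|a_i-a_{i+1}|\ge\dist(A,B)=\delta>\eps$, contradicting $|a_i-a_{i+1}|\le\eps$. Hence $F$ must be connected.

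The main obstacle is the single fact that two disjoint nonempty compact subsets of $\R^2$ are at positive distance; everything else is bookkeeping with chains. This is also precisely what forces the compactness hypothesis: without it the converse fails, as witnessed by $\mathbb{Q}\cap[0,1]$, which is $\eps$-connected for every $\eps>0$ by density of the rationals yet is totally disconnected.
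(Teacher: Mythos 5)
Your proof is correct. Note that the paper itself offers no argument for this lemma; it is quoted verbatim from Newman's book (Theorem~5.1, p.~81 of \cite{Newman}) and used as a black box, so there is no in-paper proof to compare against. Your two-step argument is the standard one and is complete: the equivalence-class argument shows the forward implication needs no compactness (the classes are relatively open, hence also relatively closed, and connectedness forces a single class), while the converse correctly isolates the one place compactness is used, namely that two disjoint nonempty compact sets $A$ and $B$ satisfy $\dist(A,B)>0$, after which the ``first crossing'' index in an $\eps$-chain with $\eps<\dist(A,B)$ yields the contradiction. The closing remark that $\mathbb{Q}\cap[0,1]$ is $\eps$-connected for every $\eps>0$ yet totally disconnected is a correct witness that the compactness hypothesis cannot simply be dropped (one can also note that closed but unbounded sets, such as the union of the $y$-axis with the graph of $y=1/x$ for $x>0$, show boundedness is needed as well). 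Nothing is missing.
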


\begin{lemma}[Theorem~1.3, p. 73, \cite{Newman}] \label{lemma:Newman3}
  If a connected set of points in $\R^2$ intersects  both $\Omega$
  and $\R^2\setminus\Omega$ it intersects  $\partial \Omega$.
\end{lemma}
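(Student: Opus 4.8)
The plan is to argue by contradiction using the standard characterization of connectedness via separations, exploiting that $\Om$ is open. Suppose $C\subset\R^2$ is connected with $C\cap\Om\neq\emptyset$ and $C\cap(\R^2\setminus\Om)\neq\emptyset$, and suppose toward a contradiction that $C\cap\partial\Om=\emptyset$. First I would record the disjoint decomposition $\R^2=\Om\cup\partial\Om\cup\ext(\Om)$, where $\ext(\Om)=\R^2\setminus\overline{\Om}$. This decomposition is valid and the three pieces are pairwise disjoint precisely because $\Om$ is a domain, hence open, so $\Om=\inte(\Om)$.

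Then the key observation is that avoiding the boundary pins down the second intersection. Since $C$ misses $\partial\Om$, we have $C\subset\Om\cup\ext(\Om)$. Moreover $C\cap(\R^2\setminus\Om)=C\cap(\partial\Om\cup\ext(\Om))=C\cap\ext(\Om)$, so the hypothesis $C\cap(\R^2\setminus\Om)\neq\emptyset$ forces $C\cap\ext(\Om)\neq\emptyset$. Combined with $C\cap\Om\neq\emptyset$, this shows that $C$ meets both $\Om$ and $\ext(\Om)$.

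To finish I would exhibit the separation explicitly. Both $\Om$ and $\ext(\Om)$ are open in $\R^2$, so $C\cap\Om$ and $C\cap\ext(\Om)$ are open in the subspace topology of $C$; by the previous paragraph they are nonempty, they are disjoint, and their union equals $C$. This is a disconnection of $C$ into two nonempty relatively open sets, contradicting the hypothesis that $C$ is connected. Therefore $C\cap\partial\Om\neq\emptyset$, as claimed.

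I do not expect a genuine obstacle here, since this is a soft topological fact; the only points requiring care are the openness of the exterior $\ext(\Om)$ (which is what lets the two relatively open pieces constitute a legitimate separation rather than merely a partition) and the bookkeeping that $\R^2\setminus\Om$ splits as $\partial\Om\cup\ext(\Om)$, so that forbidding the boundary confines the second intersection to the exterior.
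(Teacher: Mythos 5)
Your argument is correct: the decomposition $\R^2=\Om\cup\partial\Om\cup\ext(\Om)$ together with the openness of $\Om$ and $\ext(\Om)$ yields a genuine separation of $C$, contradicting connectedness. The paper itself offers no proof of this lemma (it is quoted from Newman's book), so there is nothing to compare against; your separation argument is the standard one. The only point worth flagging is that you use the hypothesis that $\Om$ is open (a domain) to identify $C\cap\Om$ with a relatively open set --- this is consistent with the paper's usage, where the lemma is applied to domains such as the Jordan domain $T_\eps$, though Newman's original statement holds for arbitrary sets $\Om$ upon replacing $\Om$ by $\inte(\Om)$ in the separation.
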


We shall need in the proof of Theorem~\ref{thm:eiguniquecont} the
observation that either $\partial N^+$ or $\partial N^-$ is
necessarily a continuum, i.e. a compact connected set with at least
two points. To show this we shall require that $G$ is a bounded
simply-connected domain.

\begin{proposition} \label{prop:bN} Suppose that $G$ is a bounded
  simply-connected domain. Then, at least, either $\partial N^+$ or
  $\partial N^-$ is a continuum.
\end{proposition}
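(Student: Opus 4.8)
The plan is to reduce the connectedness of the two boundaries to the structure of the complementary components of the continua $\overline{N^+}$ and $\overline{N^-}$. The easy parts come first: since $N^\pm\subseteq G$ are bounded, the closures $\overline{N^\pm}$ are continua and the boundaries $\partial N^\pm$ are compact with infinitely many points (recall from Remark~\ref{rmk:accessible} that neither set has isolated boundary points), so the only real task is to produce one \emph{connected} boundary. I would use two standard planar facts. Because $G$ is bounded and simply-connected, its exterior $\ext(G)=\R^2\setminus\overline G$ is connected and unbounded. And for a continuum $K\subset\R^2$, the unbounded component of $\R^2\setminus K$ has connected frontier, which is contained in $\partial K$. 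Applying the latter to $K=\overline{N^-}$ produces an \emph{outer boundary} that is a continuum and lies inside $\partial N^-$; hence $\partial N^-$ can fail to be connected only if $\overline{N^-}$ has a bounded complementary component, a ``hole,'' and symmetrically for $\overline{N^+}$.

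Next I would analyze what a hole can contain. A bounded complementary component $O$ of $\overline{N^-}$ cannot meet the connected unbounded exterior $\ext(G)$, so $O\subseteq G$, and $O$ is disjoint from $N^-$. If $O$ meets $N^+$, then, $N^+$ being connected and disjoint from $\overline{N^-}$, all of $N^+$ lies in $O$; otherwise $O$ is an open subset of the zero set $\{u=0\}$. The second alternative is the delicate one, since without unique continuation the zero set could a priori carry interior. Here I would invoke Hopf's lemma: the frontier satisfies $\partial O\subseteq\overline{N^-}$ and consists of zeros of $u$ that are accessible from $N^-$, so by Remark~\ref{rmk:accessible} and Lemma~\ref{lemma:Hopf} applied to the supersolution $-u$ on $N^-$ one gets $\nabla u\neq 0$ at such points, whereas $u\equiv 0$ on the open set $O$ forces $\nabla u=0$ on $\partial O$ by $C^1$-continuity. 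This contradiction excludes ``zero-holes,'' so every hole of $\overline{N^-}$ must contain all of $N^+$; as $N^+$ is connected, $\overline{N^-}$ then has at most one hole, present exactly when $N^+$ is enclosed by $\overline{N^-}$. The same dichotomy holds with $N^+$ and $N^-$ interchanged.

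Finally I would rule out simultaneous mutual enclosure. If $N^+$ lies in the unbounded component of $\R^2\setminus\overline{N^-}$, then $\overline{N^-}$ is hole-free and $\partial N^-$ is a continuum. Otherwise $N^+$ sits in a bounded hole $O$ of $\overline{N^-}$, so $\overline{N^+}\subseteq\overline O$ is bounded; the points of $N^-$ adjacent to $\partial O$ lie outside $\overline O\supseteq\overline{N^+}$ and connect to infinity away from $\overline{N^+}$, whence, by connectedness of $N^-$, all of $N^-$ lies in the unbounded component of $\R^2\setminus\overline{N^+}$, so $\overline{N^+}$ is hole-free and $\partial N^+$ is a continuum. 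The underlying principle is that two disjoint connected sets in the plane cannot each separate the other from infinity, so they cannot enclose one another; in either case one boundary is connected, which is the claim. I expect the genuine obstacle to be precisely the exclusion of interior in the zero set: this is where the failure of unique continuation bites, and the crux is to turn a topological ``hole'' into a boundary arc on which Hopf's lemma and $C^1$-regularity collide. The surrounding planar-topology bookkeeping (outer boundaries of continua are continua, exteriors of simply-connected domains are connected, and Lemma~\ref{lemma:Newman3}) is routine by comparison, and the hypothesis that $u$ has exactly two nodal domains (Theorem~\ref{thm:2curvenodal}) is what keeps this case analysis finite.
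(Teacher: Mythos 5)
Your route --- passing to the continuum $\overline{N^-}$, classifying the bounded components of its complement, and ruling out mutual enclosure --- is genuinely different from the paper's, which instead asks whether the \emph{open} set $N^-$ is simply-connected and, if not, works inside a Jordan curve $\gamma\subset N^-$. However, your argument has a genuine gap at exactly the point you identify as the crux: the exclusion of ``zero-holes''. You assert that $\partial O$ ``consists of zeros of $u$ that are accessible from $N^-$'' and that Hopf's lemma therefore gives $\nabla u\neq 0$ on $\partial O$. But Lemma~\ref{lemma:Hopf} is stated for smooth domains, and the way it is actually deployed in Remark~\ref{rmk:accessible} is via an interior tangent ball: it yields $\nabla u\neq 0$ only at boundary points that are nearest points of $\partial N^-$ to some point of $N^-$, and these form merely a \emph{dense} subset of $\partial N^-\cap G$. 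Density in $\partial N^-$ does not place such a point on $\partial O$, since $\partial O$ need not be relatively open in $\partial N^-$; and a tangent ball at a point of $\partial O$ from the $O$-side is useless because $u\equiv 0$ there. What you actually obtain is a sequence $y_j\to w\in\partial O$ with $\nabla u(y_j)\neq 0$, which is perfectly compatible with $\nabla u(w)=0$ by $C^1$-continuity. Nor is a Harnack argument immediate here: arbitrarily close to a point of $\partial O$ there may be points of \emph{other} complementary components of $\overline{N^-}$, in particular of the one containing $N^+$, so you cannot assert $u\leq 0$ on any neighborhood of a point of $\partial O$.

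There is a second, smaller gap in the opening reduction. From the connectedness of the frontier of the unbounded complementary component of $\overline{N^-}$ you conclude that $\partial N^-$ can be disconnected only if $\overline{N^-}$ has a hole; this tacitly identifies $\partial N^-$ with $\partial\,\overline{N^-}$, i.e.\ assumes $\inte(\overline{N^-})=N^-$. A priori $\overline{N^-}$ could contain ``slits'' --- points of $\partial N^-$ interior to $\overline{N^-}$ --- which enlarge $\partial N^-$ beyond that connected frontier and could disconnect it with no hole present. This can be excluded by Theorem~\ref{thm:Harnack} (at such a point $u\leq 0$ on a neighborhood and $u=0$ at the point, so $u\equiv 0$ nearby, contradicting that the point lies in $\overline{N^-}$), but the step is absent. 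The paper sidesteps both difficulties at once by choosing a Jordan curve $\gamma$ \emph{inside} $N^-$ whose interior $S_\gamma$ is not contained in $N^-$: there $\partial S_\gamma=\gamma$ is insulated by $N^-$, so either $u>0$ somewhere in $S_\gamma$, which traps all of $N^+$ inside $S_\gamma$ and forces $N^+$ to be simply-connected with connected boundary, or $u\leq 0$ on the whole connected set $S_\gamma$ and Harnack's inequality spreads the zero set over $S_\gamma$, contradicting $u<0$ near $\gamma$. To repair your proof you would need an analogous insulation of the hole $O$ by $N^-$ itself, not merely by $\overline{N^-}$.
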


\begin{proof}
  If either $N^+$ or $N^-$ is simply-connected, then the corresponding
  boundary is a continuum. We consider the nodal domain $N^-$ (the
  reasoning for $N^+$ is symmetric) and shall conclude that either
  $\partial N^+$ or $\partial N^-$ is a continuum. 

  We suppose, therefore, that $N^-$ is not simply-connected. Then
  there exists a Jordan curve $\gamma\subset N^-$ with its interior
  $S_\gamma$. Moreover, $S_\gamma\subset G$ since $G$ is
  simply-connected, and the set $E=\{x\in S_\gamma: u(x)\geq 0\}$ is
  non-empty. If $\tilde E=\{x\in S_\gamma: u(x)>0\}$ was empty, then
  $u(x)=0$ for all $x\in E$, and $u(x)<0$ for all $x\in
  S_\gamma\setminus E$. This is impossible by Harnack's inequality,
  Theorem~\ref{thm:Harnack}. Hence $N^-$ is simply-connected and
  $\partial N^-$ a continuum.

  We consider the case in which $\tilde E=\{x\in S_\gamma:
  u(x)>0\}\subset E$ is non-empty. The set $\tilde E$ is open and
  $\tilde E\subset N^+$. Since by Theorem~\ref{thm:2curvenodal} there
  exist exactly two nodal domains we must have that $\tilde E =
  N^+$. It follows also from Theorem~\ref{thm:2curvenodal} that $N^+$
  must be simply-connected as otherwise, by repeating the preceding
  reasoning, we would obtain a third nodal domain $\tilde N^-$. Hence
  it follows that the boundary $\partial N^+$ is a continuum.
\end{proof}

The topology of the nodal domains is not known in general. We recall
here a conjecture due to Payne~\cite[Conjecture 5]{Payne} which states
that any second eigenfunction $u_2$ of the Laplacian on a bounded
planar domain $\Om$ does not have a closed nodal line, i.e.
\[
\overline{\{x\in\Om: u_2(x)=0\}}\cap\partial\Om
\neq\emptyset.
\]
In this case, the nodal line intersects $\partial \Om$ in exactly two
points. See also Yau~\cite{Yau}.

Significant results have been obtained. To name a few,
Jerison~\cite{Jerison} proved the conjecture for long thin convex sets
without any assumption on the smoothness of the sets,
Melas~\cite{Melas} for convex domains with $C^\infty$-boundary, and
Alessandrini~\cite{Alessandrini} for general convex domains. See also
the references in these papers.

Hoffmann-Ostenhof et al.~\cite{Hoffmann} constructed a non-convex, not
simply-connected planar domain $\Om$ for which the nodal line of the
second eigenfunction of the Laplacian is closed,
i.e. 
\[
\overline{\{x\in\Om: u_2(x)=0\}}\cap\partial\Om = \emptyset. 
\]
To the best of our knowledge, it is not known, even in this linear
case, whether Payne's conjecture holds for bounded simply-connected
planar domains (see Remark 3 in
\cite{Hoffmann}). Fournais~\cite{Fournais} constructed a set in
$\R^n$, $n\geq 2$, for which the nodal surface of the second
eigenfunction of the Laplacian is closed, and thus generalizing the
domain in \cite{Hoffmann} to higher dimensions by an alternative
argument.

We also note that a recent paper \cite{Kennedy} contains an example
of a multiply connected domain in $\R^2$ for which the second
eigenfunction of the Laplacian with Robin boundary conditions has an
interior nodal line.

\section{Some vanishing properties of the second eigenfunction}
\label{sect:unique}

The following is our main theorem.

\begin{theorem} \label{thm:eiguniquecont} Suppose $u$ is an
  eigenfunction corresponding to the second eigenvalue $\lambda_2$ in
  a bounded simply-connected domain $G$ in $\R^2$. We assume further
  that for all $x\in G$ there exists $r_x>0$ such that for all $r\leq
  r_x$ the set $\{z\in B_r(x)\subset G:\ u(z)=0\}$ is connected. Then
  if $u=0$ in an open subset of $G$, then $u\equiv 0$ in $G$.
\end{theorem}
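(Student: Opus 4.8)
The plan is to argue by contradiction using the unique continuation of the "positive part" restricted to a nodal domain, combined with the topological structure established in Proposition~\ref{prop:bN}. Suppose $u=0$ on some open set $U\subset G$, and yet $u\not\equiv 0$, so that both nodal domains $N^+$ and $N^-$ are non-empty. The key structural fact I would exploit is that on each nodal domain $u$ is a first eigenfunction (of that subdomain), so $u$ does not change sign there and Harnack's inequality, Theorem~\ref{thm:Harnack}, applies to $u$ (respectively $-u$) on $N^+$ (respectively $N^-$).

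First I would observe that the open set $U$ where $u$ vanishes lies in the common boundary region between $N^+$ and $N^-$: since $u=0$ on $U$, no point of $U$ belongs to $N^+$ or $N^-$, hence $U\subset\partial N^+\cap\partial N^-\cap G$. The role of the local hypothesis on $\{z\in B_r(x):u(z)=0\}$ being connected is to force the zero set to behave like a one-dimensional nodal line rather than a "fat" set; an open $U$ on which $u\equiv0$ contradicts the idea that the zero set is locally a thin connected separator. I would make this precise by picking $x_*\in U$ and a radius $r\le r_{x_*}$ with $B_r(x_*)\subset U$. Then $\{z\in B_r(x_*):u(z)=0\}=B_r(x_*)$, which is fine as a connected set, so the contradiction cannot come from the hypothesis alone at an interior point of $U$; instead I would look at a boundary point of the vanishing set.

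The heart of the argument uses Proposition~\ref{prop:bN} together with Remark~\ref{rmk:accessible}. By Proposition~\ref{prop:bN}, at least one of $\partial N^+$, $\partial N^-$ is a continuum; say $\partial N^+$ is. By Remark~\ref{rmk:accessible}, the points of $\partial N^+$ that are accessible from $N^+$, lie in $G$, and at which $\nabla u\neq 0$ (via Hopf's lemma, Lemma~\ref{lemma:Hopf}) are dense in $\partial N^+$. The plan is to show that the open vanishing set $U$ must contain an arc of $\partial N^+$, and then to reach a contradiction with the density of points where $\nabla u\neq0$. Concretely, consider the relatively open piece of $\partial N^+$ meeting $U$. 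Since $U$ is open and $u\equiv0$ on $U$, any point of $\partial N^+\cap U$ has a full neighborhood on which $u=0$, so in particular $\nabla u=0$ on that neighborhood. This directly contradicts Remark~\ref{rmk:accessible}, which guarantees points arbitrarily close by (in the relative topology of $\partial N^+$) at which $\nabla u\neq0$ --- provided I can guarantee that $\partial N^+$ actually enters $U$.

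The main obstacle, therefore, is establishing that the vanishing set $U$ genuinely meets the boundary continuum $\partial N^+$ (or $\partial N^-$), rather than sitting in some pocket disjoint from the relevant nodal boundary. I would address this by a connectivity/separation argument: the open set $U$ separates points of $N^+$ from points of $N^-$ locally, and using Lemma~\ref{lemma:chain} and Lemma~\ref{lemma:Newman3} one shows that a connected set passing from $N^+$ into the region $u<0$ must cross $\partial N^+$, and that crossing can be arranged to occur within $U$ by choosing the approach through $U$. The connectedness hypothesis on the local zero sets is what prevents the nodal line from "splitting" so as to avoid $U$, and it is precisely this point --- converting the local connectedness assumption into the global statement that $\partial N^+$ must intersect the fat vanishing set --- that I expect to require the most care. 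Once that intersection is secured, Hopf's lemma via Remark~\ref{rmk:accessible} closes the argument, since it produces a gradient-nonvanishing boundary point inside a region where $u$ is identically zero, an immediate contradiction. Hence $u\equiv0$ in $G$.
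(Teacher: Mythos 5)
Your proposal contains a fatal flaw at its foundation: the claim that $U\subset\partial N^+\cap\partial N^-\cap G$ is false, and in fact the opposite holds. Since $U$ is open and $u\equiv 0$ on $U$, we have $U\cap N^+=\emptyset$; but then no point of $U$ can lie on $\partial N^+$, because $U$ itself is a neighbourhood of each of its points that misses $N^+$. Hence $U\cap\partial N^+=U\cap\partial N^-=\emptyset$. This makes the ``heart'' of your argument vacuous: there is never a point of $\partial N^+$ possessing a full neighbourhood contained in $U$, so the intended contradiction with Remark~\ref{rmk:accessible} cannot arise, and no amount of care in the separation argument you defer to the end of your sketch will produce one --- the boundary continuum simply cannot ``enter'' the open vanishing set. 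What can be shown (and is shown in the paper, via Harnack's inequality and the maximality of the vanishing set $D$) is only the much weaker inclusion $\partial D\cap G\subset\partial N^+\cap\partial N^-\cap G$: the \emph{boundary} of the vanishing set meets the nodal boundaries, not the set itself.

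One might hope to salvage your strategy at such a boundary point $x\in\partial D\cap G$: there $\nabla u(x)=0$ by continuity of the gradient (since $u\in C^{1,\alpha}_{\mathrm{loc}}$ and $\nabla u\equiv 0$ in $D$), so a single application of Hopf's lemma at $x$ would finish the proof. But Hopf's lemma, and the density statement of Remark~\ref{rmk:accessible}, only produce points of $\partial N^+$ admitting an interior tangent ball from $N^+$ \emph{densely} on $\partial N^+$; they give no control at the particular points of $\partial D$, where the required interior ball condition may fail (the spiral example of Remark~\ref{remark} illustrates exactly this kind of degeneracy). This is why the paper's proof takes an entirely different route: it uses the continuum $\partial N^+$ (Proposition~\ref{prop:bN}), $\eps$-chains (Lemma~\ref{lemma:chain}) and cross-cuts in $D$ and in $N^+$ to assemble a Jordan curve bounding a Jordan domain $T_\eps$, shows via Harnack that $N^-$ must have points both inside and outside $T_\eps$, and derives the contradiction from the fact that an arc of $N^-$ joining them must cross the Jordan curve, which is ruled out on each constituent piece, the last case after letting $\eps\to 0$ and invoking uniform continuity of $u$. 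Your proposal contains none of this machinery and, as written, does not prove the theorem.
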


\begin{proof}
  By Theorem~\ref{thm:2curvenodal} $u$ has exactly two nodal domains
  $N^+$ and $N^-$. We assume, on the contrary, that \smallskip
  \begin{enumerate}
  \item[(A)] $u$ vanishes in a maximal open set $D\subset G$ but is
    not identically zero in $G$.
  \end{enumerate}
  The maximal open set $D$ is formed as follows: for every $x\in G$
  for which there exists an open neighborhood such that $u\equiv 0$ on
  this neighborhood we denote by $B(x,r_x)$, $r_x=\sup\left\{t>0:
    u|_{\partial B(x,t)}\equiv 0\right\}$, the maximal open
  neighborhood of $x$ where $u$ vanishes identically. Then the maximal
  open set $D$ is simply the union of all such neighborhoods. We pick
  a connected component of $D$, still denoted by $D$.

  We first show that antithesis (A) implies that any neighborhood of
  $x\in\partial D\cap G$ contains points of both nodal domains $N^+$
  and $N^-$. 

  Suppose there existed a point $x\in\partial D\cap G$ and its
  spherical neighborhood $B_\delta(x)$, $\delta>0$, such that
  $\overline{B}_\delta\subset G$ and $B_\delta(x)\cap\ext(D)$ contains
  only points of either $N^+$ or $N^-$, i.e. points at which either
  $u>0$ or $u<0$. Assume, without loss of generality, that
  $B_\delta(x)\cap\ext(D)$ contains points of $N^+$ only. Then $u\geq
  0$ on $B_\delta(x)$ and by Harnack's inequality,
  Theorem~\ref{thm:Harnack}, $u\equiv 0$ on $B_{\delta/2}(x)$, which
  contradicts the maximality of the set $D$, and hence also the
  antithesis (A). In this case our claim follows.
  
  We, therefore, assume that for any $x\in\partial D\cap G$ and for
  any $\delta>0$ the spherical neighborhood $B_\delta(x)$ contains
  points of both nodal domains $N^+$ and $N^-$, and
  $B_\delta(x)\subset G$. Hence
  \begin{equation} \label{eq:Dinclusion} (\partial D\cap G)\subset
    (\partial N^+\cap\partial N^-\cap G).
  \end{equation}

Let $\partial D_A = \{x\in\partial D: x \textrm{ is accessible from }
D\}$. By Lemma~\ref{lemma:Newman2} accessible boundary points are
dense. We select (Lemma~\ref{lemma:Newman3}) points $x_1$ and $x_2$ in
the set $\partial D_A\cap G$ and the associated spherical
neighborhoods $B_\delta(x_1)$ and $B_\delta(x_2)$ such that
$\overline{B}_\delta(x_1)\cap\overline{B}_\delta(x_2)=\emptyset$, and
that $\overline{B}_\delta(x_1),\,\overline{B}_\delta(x_2)\subset G$.

By Proposition~\ref{prop:bN} we may assume, without loss of
generality, that $\partial N^+$ is a continuum. In addition, we note
that it is easy to check that $\partial N^+\cap G = \partial N^-\cap
G$. Then we select $x_3\in B_\delta(x_1)\cap\partial N_A^+$ and
$x_4\in B_\delta(x_2)\cap\partial N_A^+$. We note that $\delta$ can be
chosen small enough so that the sets $B_\delta(x_1)\cap\partial N_A^+$
and $B_\delta(x_2)\cap\partial N_A^+$ are both connected. This is
assured by our extra assumption in the formulation of the theorem.

We connect $x_1$ to $x_2$ by a cross-cut $\gamma_D$ in $D$, and $x_3$
to $x_4$ by a cross-cut $\gamma_{N^+}$ in $N^+$. We remark that $x_3$,
and analogously $x_4$, is accessible in $N^+$ with a line segment, see
Remark~\ref{rmk:accessible}. Also $x_1$, and analogously $x_2$, is
accessible in $D$ with a line segment. We fix such line segments to
access the points $x_1,\,x_2,\,x_3$, and $x_4$. In this way the line
segments constitute part of the cross-cut $\gamma_D$ and
$\gamma_{N^+}$, respectively.

Since the boundary $\partial N^+$ is connected it is also
$\eps$-connected for every $\eps>0$. Hence for each $\eps>0$ the
points $x_1$ and $x_3$ can be joined by an $\eps$-chain
$\{a_1,\,\ldots,\,a_k\}\subset\partial N^+\cap G$, $k=k(\eps)$, such
that
\[
x_1 = a_1,\, a_2,\, \ldots, a_{k-1},\, a_k = x_3.
\]
We consider a collection of open balls
$\{B_{\frac{3}{2}\eps}(a_i)\}_{i=1}^k$, $a_i\in \partial N^+\cap G$,
such that $\overline{B}_{\frac{3}{2}\eps}(a_i)\subset G$, and a domain
$U_\eps^1$ which is defined to be
\[
U_\eps^1 = \bigcup_{i=1}^kB_{\frac{3}{2}\eps}(a_i).
\]
Since $U_\eps^1$ is a domain there exists a Jordan arc,
$\gamma_{x_1x_3}^\eps$, connecting $x_1$ to $x_3$ in
$U_\eps^1$. Correspondingly, the points $x_2$ and $x_4$ can be joined
by an $\eps$-chain in $\partial N^+$ and we obtain a domain $U_\eps^2$
and a Jordan arc $\gamma_{x_2x_4}^\eps$ connecting $x_2$ to $x_4$ in
$U_\eps^2$.

It is worth noting that we have selected $\gamma_{x_1x_3}^\eps$ and
$\gamma_{x_2x_4}^\eps$ such that either of them does not intersect
$\gamma_D$ or $\gamma_{N^+}$, save the points $x_1$ and $x_2$, and
$x_3$ and $x_4$, respectively. This is possible because of the line
segment construction descibed above.

From the preceding Jordan arcs we obtain a Jordan curve $\Gamma^\eps$,
and by slight abuse of notation we write it as a product
\[
\Gamma^\eps=\gamma_{x_1x_3}^\eps\cdot\gamma_{N^+}\cdot\gamma_{x_2x_4}^\eps\cdot\gamma_D.
\]
The Jordan curve $\Gamma^\eps$ divides the plane into two disjoint
domains, and $\Gamma^\eps$ constitutes the boundary of both
domains. We consider the bounded domain, denoted by $T_\eps$, enclosed
by $\Gamma^\eps$. See Figure~\ref{Figure}.

We next deal with the Jordan domain $T_\eps$. There exists at least
one point $y\in T_\eps$ such that $u(y)<0$, i.e. $y\in N^-$. Assume
that this is not the case: then $u(x)\geq 0$ for every $x\in
T_\eps$. Recall that $\gamma_D$ is one of the Jordan arcs which
constitutes the boundary of $T_\eps$. It hence follows that $T_\eps$
contains points of $D$ (Lemma~\ref{lemma:cross-cut}). By Harnack's
inequality, Theorem~\ref{thm:Harnack}, $u\equiv 0$ in $T_\eps$. This
is, however, impossible since $\gamma_{N^+}$ constitutes the boundary
of $T_\eps$, thus $u>0$ on a sufficiently small neighborhood of a
point in $\gamma_{N^+}$.

In an analogous way, it is possible to show that there exists a point
$z\in N^-\cap (G\setminus\overline{T}_\eps)$. We then connect $z$ and
$y$ in $N^-$ by a Jordan arc $\gamma_{zy}$. Observe that $u(x)<0$ for
every $x\in \gamma_{zy}$.

Lemma~\ref{lemma:Newman3} implies that the Jordan arc $\gamma_{zy}$ as
a connected set intersects $\Gamma^\eps$ at least at one point. We
distinguish next four possible cases for the point of intersection.

If the point of intersection is contained in $\gamma_D$ or in
$\gamma_{N^+}$ we have reached a contradiction as $u(x)=0$ for every
$x\in \gamma_D$ and $u(x)>0$ for every $x\in \gamma_{N^+}$.

Consider $\gamma_{x_1x_3}^\eps$ and $\gamma_{x_2x_4}^\eps$, and the
point of intersection which we denote by $x_\eps$ for every
$\eps>0$. We can select an appropriate subsequence
$\{x_{\eps_j}\}_{j=1}^\infty$, $\lim_{j\to\infty}\eps_j=0$, such that
for each $j$ either $x_{\eps_j}\in U_{\eps_j}^1$ or $x_{\eps_j}\in
U_{\eps_j}^2$. We assume, without loss of generality, that
$x_{\eps_j}\in U_{\eps_j}^1$. The sequence $\{x_{\eps_j}\}$ is clearly
bounded, and hence there exists a subsequence, still denoted
$\{x_{\eps_j}\}_{j=1}^\infty$, such that
\[
\lim_{j\to\infty}x_{\eps_j} = x_0,
\]
and $x_0\in\gamma_{zy}$ since $\gamma_{zy}$ is a compact set. Observe
that each $x_{\eps_j}\in B_{\frac{3}{2}\eps_j}(a_m)$ for some
$a_m\in \partial N^+\cap G$ in the $\eps_j$-chain. We note that
$u(a_m)=0$. Moreover, if there existed $\delta_0$ and a subsequence,
still denoted $\{x_{\eps_j}\}_{j=1}^\infty$, such that
\[
|u(x_{\eps_j})| \geq \delta_0 >0
\]
for every $x_{\eps_j}$, this would contradict with uniform continuity
of $u$ (note that $u$ is uniformly continuous on compact subsets of
$G$). We hence have that
\[
u(x_0)=\lim_{j\to\infty}u(x_{\eps_j})=0.
\]
In conclusion, we have reached a contradiction since $u(x_0)=0$ but,
on the other hand, $x_0\in \gamma_{zy}$ and hence $u(x_0)<0$.

All four cases lead to a contradiction. Hence antithesis (A) is
false, thus the claim follows.
\end{proof}

\begin{figure}[!htbp] 
\centering
\includegraphics[width=0.8\textwidth]{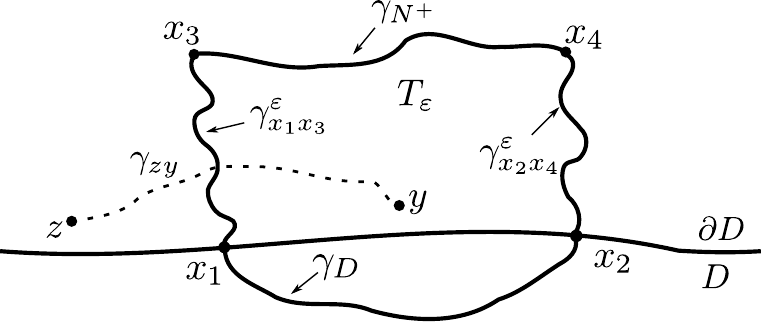}
\caption{
 Jordan domain $T_\eps$ and Jordan curve $\gamma_{zy}$ (dotted line)
 connecting $z$ to $y$ in $N^-$.}
\label{Figure}
\end{figure}

Let us discuss our extra assumption in Theorem~\ref{thm:eiguniquecont}.

\begin{remark}
  We mention that the extra assumption, for all $x\in G$ there exists
  $r_x>0$ such that for all $r\leq r_x$ the set $\{z\in
  \overline{B}_r(x):\ u(z)=0\}$ is connected, could be replaced with
  the assumption that the set has finitely many components.
\end{remark}

\begin{remark}
  The extra assumption is closely related to the concept of
  topological monotonicity or quasi-monotonicity introduced by Whyburn
  in \cite{Whyburn}; we also refer to Astala et al.~\cite[20.1.1,
  pp. 530 ff]{Astala}.

  Let us try to clarify the role of this assumption in the proof of
  the preceding theorem. We fix there the point $x_1\in \partial
  D_A\cap G$, its neighborhood $B_\delta(x_1)$, and the point $x_3\in
  B_\delta(x_1)\cap\partial N_A^+$ (similarly $x_2\in \partial D_A\cap
  G$, $B_\delta (x_2)$, and $x_4\in B_\delta(x_2)\cap\partial
  N_A^+$). At $x_1$ and $x_3$ the function $u$ is known to
  vanish. Using the extra assumption in
  Theorem~\ref{thm:eiguniquecont}, we may conclude that there indeed
  exists a continuum $\mathcal{C}_\delta$ that connects $x_1$ to $x_3$
  \emph{in} $B_\delta(x_1)$ so that $u(x)=0$ for every
  $x\in\mathcal{C}_\delta$, or in other words, that the set
  $B_\delta(x_1)\cap\partial N_A^+$ is connected.


\end{remark}

\begin{remark} \label{remark} The following example of possible
  spiral-like behavior, kindly provided us by Giovanni Alessandrini,
  illustrates the role of our extra assumption in
  Theorem~\ref{thm:eiguniquecont}.

  Let $G=B_1(0)$ and consider the function $u$ as follows
\begin{equation*}
  u(re^{i\theta})= \left\{\begin{array}{ll}
      (1-r)(2r-1)^2\sin(\theta-\log(2r-1)), & \frac1{2}<r\leq 1, \\
      0, & 0\leq r\leq \frac1{2}.\end{array}\right.
\end{equation*}
It is important to note that the function $u$ above is not known to be
a solution to \eqref{eq:eigpLap} for any $\lambda$. It is
differentiable in $G$ and its gradient vanishes on
$\overline{B}_{\frac1{2}}(0)$. This function has the following nodal
domains
\begin{align*}
  N^+ & = \left\{z=re^{i\theta}:\ 0<\theta-\log(2r-1) <\pi,\, \frac1{2}<r<1\right\}, \\
  N^- & = \left\{z=re^{i\theta}:\ \pi<\theta-\log(2r-1) <2\pi,\,
    \frac1{2}<r<1\right\}.
\end{align*}
These nodal domains are simply-connected and their boundaries contain
each one half of the circle $\partial B_1(0)$. In addition, the nodal
``line'' is the union of the closed disk $\overline{B}_{\frac1{2}}(0)$
and the two spirals
\begin{align*}
  S_1 & = \left\{z=re^{i\theta}:\ \theta=\log(2r-1),\, \frac1{2}<r<1\right\}, \\
  S_2 & = \left\{z=re^{i\theta}:\ \theta=\pi+\log(2r-1),\,
    \frac1{2}<r<1\right\}.
\end{align*}
It is not known to us if the aforementioned spiral-like scenario can
be ruled out in our method. This is why the existence of a continuum
$C_\delta$ as discussed in the preceding remark is not guaranteed
without an extra assumption, see Figure~\ref{Figure2}. It is an open
research question whether this assumption could be omitted. As the
above example shows, even if the number of nodal domains is finite in
a domain, in a small spherical neighborhood of a point there can be
infinitely many parts of these nodal domains.

\begin{figure}[!lhtbp] 
\centering
\includegraphics[width=0.9\textwidth]{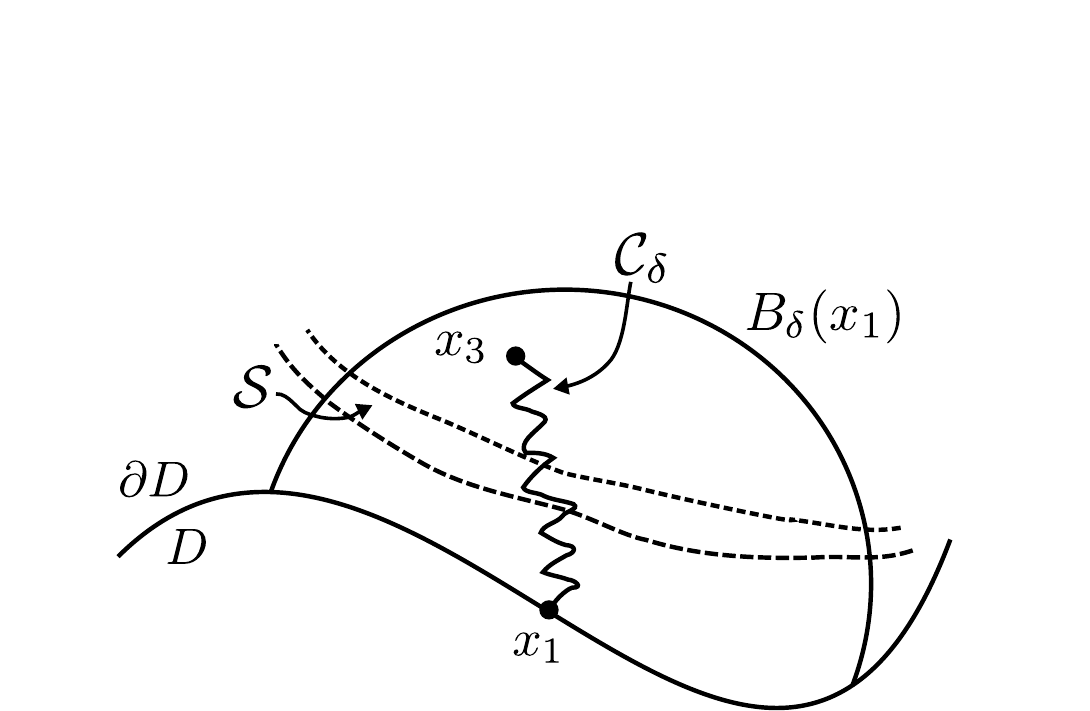}
\caption{A situation in which spiral $\mathcal{S}$, on which $u>0$ or
  $u<0$, destroys the existence of a continuum $\mathcal{C}_\delta$,
  on which $u=0$, for each $\delta>0$.}
\label{Figure2}
\end{figure}

\end{remark}

\section{Fu\v cik spectrum}
\label{sect:Fucik}

We may consider the more general equation 
\begin{equation} \label{eq:generaleig} -\nabla\cdot(|\nabla
  u|^{p-2}\nabla u)=\alpha|u|^{p-2}u_+-\beta|u|^{p-2}u_-,
\end{equation}
where $1<p<\infty$, $u_+=\max\{u,0\}$, $u_-=-\min\{u,0\}$, $\alpha,\,
\beta \in\R$ are spectral parameters, and $u=0$ on the boundary of
$G$. If a nontrivial $u\in W_0^{1,p}(G)$ satisfies
\eqref{eq:generaleig} in the weak sense it is called a Fu\v cik
eigenfunction. The corresponding pair $(\alpha,\,\beta)$ is a Fu\v cik
eigenvalue. The set of all Fu\v cik eigenvalues is the Fu\v cik
spectrum $\Sigma_p$ and, clearly, the spectrum of \eqref{eq:eigpLap}
is contained in $\Sigma_p$.

The first two curves $\mathcal{C}_1$ and $\mathcal{C}_2$ of $\Sigma_p$
can be considered as the analogue of the first two eigenvalues
$\lambda_1$ and $\lambda_2$ of the spectrum of \eqref{eq:eigpLap}. Any
pair $(\alpha,\, \beta)\in\Sigma_p$ satisfies $\alpha\geq\lambda_1$
and $\beta\geq\lambda_1$, moreover,
$\mathcal{C}_1=(\{\lambda_1\}\times\R) \cup (\R\times\{\lambda_1\})$
due to the fact that the first eigenfunctions to \eqref{eq:eigpLap} do
not change signs. Analogously, any Fu\v cik eigenfunction associated
to a Fu\v cik eigenvalue $(\alpha,\,
\beta)\notin(\{\lambda_1\}\times\R) \cup (\R\times\{\lambda_1\})$
changes sign. The second curve $\mathcal{C}_2$ was considered and
constructed in \cite{CuestaJDiffEq}, and roughly it can be defined to
be such a continuous decreasing curve in the $(\alpha,\,\beta)$-plane
that passes through $(\lambda_2,\,\lambda_2)$ and does not intersect
$\mathcal{C}_1$. Any pair $(\alpha,\,\beta)$ between $\mathcal{C}_1$
and $\mathcal{C}_2$ does not belong to $\Sigma_p$.

It is known that any Fu\v cik eigenfunction corresponding to the Fu\v
cik eigenvalue $(\alpha,\beta)\in\mathcal{C}_2$ has finite number of
nodal domains. We refer to \cite{DrabekRobinson}. We state the
counterpart of Theorem~\ref{thm:2curvenodal} in the case of Fu\v cik
spectrum as a theorem. We refer to \cite{DrabekRobinson} for an
alternative proof of the following result.

\begin{theorem}[Cuesta et al. \cite{Cuesta}] \label{thm:Fuciknodal}
  Suppose $u$ is a Fu\v cik eigenfunction corresponding to the eigenvalue
  $(\alpha,\beta)\in\mathcal{C}_2$. Then $u$ has exactly two nodal
  domains.
\end{theorem}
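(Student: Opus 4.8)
The plan is to prove the statement by establishing separately that $u$ has \emph{at least} two nodal domains and \emph{at most} two, exactly as in the proof of the eigenvalue counterpart, Theorem~\ref{thm:2curvenodal}; the only new feature is that the single Rayleigh quotient is replaced by the two-parameter functional attached to \eqref{eq:generaleig}. Recall from \cite{CuestaJDiffEq} that the second curve $\mathcal{C}_2$ is produced by a mountain-pass (min-max) construction: on the constraint sphere $S=\{v\in W_0^{1,p}(G):\int_G|v|^p\,dx=1\}$ one minimizes, over the class $\Gamma$ of continuous paths joining a fixed negative first eigenfunction to a positive one, the maximum along the path of a functional which may schematically be taken of the form
\[
J_s(v)=\int_G|\nabla v|^p\,dx-s\int_G (v_-)^p\,dx,
\]
and a Fu\v cik eigenfunction with $(\alpha,\beta)\in\mathcal{C}_2$ appears precisely as a critical point at the resulting min-max level $c(s)$.

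For the lower bound, note that $\mathcal{C}_2$ passes through $(\lambda_2,\lambda_2)$ and is disjoint from $\mathcal{C}_1=(\{\lambda_1\}\times\R)\cup(\R\times\{\lambda_1\})$, so $(\alpha,\beta)\notin\mathcal{C}_1$; by the discussion preceding the theorem the associated Fu\v cik eigenfunction $u$ must change sign. Hence both $u_+$ and $u_-$ are nontrivial, and $u$ possesses at least one nodal domain on which $u>0$ and one on which $u<0$, i.e.\ at least two nodal domains.

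For the upper bound I would argue by contradiction, following the genus argument behind Theorem~\ref{thm:DrabekRobinson}(2). Suppose $u$ had at least three nodal domains $\Omega_1,\Omega_2,\Omega_3$, and set $w_i=u\chi_{\Omega_i}\in W_0^{1,p}(G)$. The $w_i$ have pairwise disjoint supports, so every integral occurring in $J_s$ evaluated on a combination $\sum_i t_i w_i$ splits as a sum over $i$; moreover, since the restriction of a Fu\v cik eigenfunction to a nodal domain is a first eigenfunction there, each $w_i$ carries an exact energy identity on $\Omega_i$. These three disjoint pieces span a symmetric set of Krasnoselskii genus at least three on which $J_s$ does not exceed the critical level, which by the variational description of the higher Fu\v cik curves would place a point of the spectrum $\Sigma_p$ strictly between $\mathcal{C}_1$ and $\mathcal{C}_2$. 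This contradicts the fact, recalled above, that no pair lying strictly between $\mathcal{C}_1$ and $\mathcal{C}_2$ belongs to $\Sigma_p$. Together with the lower bound this yields exactly two nodal domains; Harnack's inequality, Theorem~\ref{thm:Harnack}, enters to exclude degenerate pieces on which $u$ would vanish identically.

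The crux is the upper-bound step. In the pure eigenvalue case the functional is the symmetric Rayleigh quotient, whereas here $J_s$ weights $u_+$ and $u_-$ through the distinct parameters $\alpha$ and $\beta$; the difficulty is to set up the genus/min-max machinery so that $\mathcal{C}_2$ is certified as the variational level of index two, and then to verify that the competitor assembled from $w_1,w_2,w_3$ genuinely realizes a \emph{strictly} smaller level, landing in the forbidden gap between $\mathcal{C}_1$ and $\mathcal{C}_2$ despite the $\alpha$--$\beta$ asymmetry in the energy splitting. The remaining ingredients---the sign change, the additivity of the energy over disjoint supports, and the first-eigenfunction identity on each $\Omega_i$---are routine given the results already quoted, so that the whole argument reduces to the careful bookkeeping of this strictness.
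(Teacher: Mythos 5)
First, a point of comparison: the paper does not prove this statement at all --- it is imported verbatim from Cuesta--de Figueiredo--Gossez \cite{Cuesta}, with \cite{DrabekRobinson} cited for an alternative proof --- so there is no internal argument to measure yours against. Your outline does follow the strategy of those references (the minimax characterization of $\mathcal{C}_2$ from \cite{CuestaJDiffEq} plus a contradiction built from nodal pieces with disjoint supports), and the ``at least two'' half is correct and complete: $(\alpha,\beta)\in\mathcal{C}_2$ lies off $\mathcal{C}_1$, hence $u$ changes sign and has a positive and a negative nodal domain.

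The upper bound, however, has a genuine gap at precisely the point you defer to ``careful bookkeeping.'' Because the $w_i=u\chi_{\Omega_i}$ have disjoint supports and each is a first eigenfunction of its own nodal domain, the energy identity obtained by testing \eqref{eq:generaleig} with $w_i$ forces $J_s$, evaluated on normalized combinations $\sum_i t_i w_i$, to \emph{equal} the critical level $c(s)$ on the sign-changing part of that set rather than to fall strictly below it. Disjoint-support additivity therefore does not place anything in the forbidden region between $\mathcal{C}_1$ and $\mathcal{C}_2$; manufacturing the strict inequality is the entire content of the proof in \cite{Cuesta}, where it comes from a delicate path construction (deforming through the extra nodal piece into the cone of functions of one sign, and invoking the simplicity and positivity of the first eigenfunction together with the strict monotonicity of $\lambda_1$ under domain inclusion), not from the splitting of the integrals. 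In addition, the genus count points the wrong way: a symmetric set of genus three on which $J_s\le c(s)$ would only bound a \emph{third} minimax level from above by $c(s)$, which is not by itself a contradiction since minimax levels may coincide; it does not produce a point of $\Sigma_p$ strictly between $\mathcal{C}_1$ and $\mathcal{C}_2$. As written, the decisive step of the upper bound is asserted rather than proved.
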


We consider \eqref{eq:generaleig} in a bounded simply-connected domain
$G$ in $\R^2$. Having Theorem~\ref{thm:Fuciknodal} and Harnack's
inequality \cite{Trudinger} at our disposal, we may state the
following theorem; Theorem~\ref{thm:Fuciknodal} together with the
proof of Theorem~\ref{thm:eiguniquecont} justifies the claim.

\begin{theorem}
  Suppose $u$ is a Fu\v cik eigenfunction corresponding to the
  eigenvalue $(\alpha,\,\beta)\in\mathcal{C}_2$ in $G$.  We assume
  further that for all $x\in G$ there exists $r_x>0$ such that for all
  $r\leq r_x$ the set $\{z\in B_r(x)\subset G:\ u(z)=0\}$ is connected. Then 
  if $u=0$ in an open subset of $G$, then $u\equiv 0$ in $G$.
\end{theorem}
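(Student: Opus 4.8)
The plan is to follow the proof of Theorem~\ref{thm:eiguniquecont} essentially verbatim, substituting the Fu\v cik machinery for the eigenvalue machinery at each point where the latter was invoked. The entire earlier argument rests on only three structural ingredients: (1) that $u$ has exactly two nodal domains $N^+$ and $N^-$; (2) the Harnack inequality, which is used to convert ``$u\geq 0$ on a ball together with $u$ vanishing on the boundary of a sub-ball'' into ``$u\equiv 0$ on a sub-ball,'' and to rule out isolated signs; and (3) the topological lemmas about cross-cuts, $\eps$-chains, and accessible boundary points, none of which depend on the precise equation $u$ solves. First I would observe that ingredient (1) is supplied here by Theorem~\ref{thm:Fuciknodal} in place of Theorem~\ref{thm:2curvenodal}, and that ingredient (2) is supplied by the Harnack inequality for Fu\v cik eigenfunctions cited as \cite{Trudinger} in the preceding discussion.

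Next I would check that the regularity needed to run Hopf's lemma (Lemma~\ref{lemma:Hopf}) and the uniform-continuity step at the end of the proof is available. The proof of Theorem~\ref{thm:eiguniquecont} uses that $u\in C^1_{\mathrm{loc}}(G)$ so that $\nabla u$ is defined and Hopf's lemma applies at accessible boundary points, and that $u$ is uniformly continuous on compact subsets of $G$. For \eqref{eq:generaleig} the right-hand side $\alpha|u|^{p-2}u_+-\beta|u|^{p-2}u_-$ is again a bounded continuous function of $u$ on compact sets, so the same elliptic regularity theory (\cite{DiB, Tolksdorf}) gives $u\in C^{1,\alpha}_{\mathrm{loc}}(G)$; I would flag this explicitly so that Remark~\ref{rmk:accessible} (density of accessible boundary points with $\nabla u\neq 0$) and Proposition~\ref{prop:bN} (that $\partial N^+$ or $\partial N^-$ is a continuum) carry over without change. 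Proposition~\ref{prop:bN} itself uses only Harnack and the two-nodal-domain property, so it applies to Fu\v cik eigenfunctions as well.

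With these substitutions in place, the argument proceeds exactly as before: assume antithesis (A), form the maximal vanishing set $D$ and a component of it, show via Harnack that every boundary point of $D$ in $G$ has both signs nearby so that \eqref{eq:Dinclusion} holds, construct the cross-cuts $\gamma_D$ and $\gamma_{N^+}$ and the $\eps$-chains along the continuum $\partial N^+$ to build the Jordan curve $\Gamma^\eps$ and its enclosed Jordan domain $T_\eps$, locate points $y\in T_\eps\cap N^-$ and $z\in N^-\setminus\overline{T}_\eps$, join them by a Jordan arc $\gamma_{zy}$ on which $u<0$, and derive a contradiction from the forced intersection with $\Gamma^\eps$ via the four-case analysis. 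The extra connectedness hypothesis is used in precisely the same way, to guarantee that $B_\delta(x_i)\cap\partial N_A^+$ is connected.

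The main obstacle I anticipate is not a genuine mathematical difficulty but a bookkeeping one: one must verify that every appeal to ``Harnack'' and ``$u$ is the first eigenfunction on each nodal domain'' in the original proof survives the replacement of \eqref{eq:eigpLap} by \eqref{eq:generaleig}. The delicate point is the Harnack inequality on a nodal domain where $u$ has a fixed sign: there $u_-\equiv 0$ (resp.\ $u_+\equiv 0$), so \eqref{eq:generaleig} reduces to the \p-eigenvalue-type equation with the single parameter $\alpha$ (resp.\ $\beta$), and the nonnegative supersolution form $-\nabla\cdot(|\nabla u|^{p-2}\nabla u)\geq 0$ needed for Hopf's lemma holds exactly as before because the right-hand side has the sign of $u$. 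Once this sign observation is recorded, the proof is complete; accordingly, rather than reproducing the argument I would simply assert, as the authors do, that ``Theorem~\ref{thm:Fuciknodal} together with the proof of Theorem~\ref{thm:eiguniquecont} justifies the claim,'' and refer the reader to that proof.
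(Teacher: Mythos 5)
Your proposal is correct and takes essentially the same route as the paper, which itself disposes of this theorem in one sentence by citing Theorem~\ref{thm:Fuciknodal} and Harnack's inequality and deferring to the proof of Theorem~\ref{thm:eiguniquecont}. Your additional checks (regularity for \eqref{eq:generaleig}, the sign of the right-hand side on a nodal domain so that Hopf's lemma and Harnack apply, noting $\alpha,\beta\geq\lambda_1>0$) are exactly the bookkeeping the paper leaves implicit.
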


\end{document}